\documentclass[11pt,a4paper]{amsart}

\usepackage[utf8]{inputenc}
\usepackage[T1]{fontenc}
\usepackage{amsmath,amssymb,amsthm}
\usepackage[margin=1.35in]{geometry}
\usepackage{xcolor}
\usepackage{hyperref}

\newtheorem{theorem}{Theorem}[section]
\newtheorem{lemma}[theorem]{Lemma}
\newtheorem{proposition}[theorem]{Proposition}
\newtheorem{corollary}[theorem]{Corollary}
\newtheorem{conjecture}[theorem]{Conjecture}
\newtheorem*{thmA}{Theorem A}

\newtheorem{innercustomlemma}{Lemma}
\newenvironment{customlemma}[1]
  {\renewcommand\theinnercustomlemma{#1}\innercustomlemma}
  {\endinnercustomlemma}

\theoremstyle{definition}
\newtheorem{definition}[theorem]{Definition}
\theoremstyle{remark}
\newtheorem{remark}[theorem]{Remark}

\numberwithin{equation}{section}

\DeclareMathOperator{\divg}{div}
\DeclareMathOperator{\Hess}{Hess}
\DeclareMathOperator{\tr}{tr}
\DeclareMathOperator{\supp}{supp}

\DeclareMathOperator{\Ric}{Ric}
\DeclareMathOperator{\Cut}{Cut}
\newcommand{\R}{\mathbb{R}}
\newcommand{\Hy}{\mathbb{H}}
\newcommand{\gl}{\nabla_{L}}
\newcommand{\lapL}{\Delta_{L}}
\newcommand{\ct}{\mathrm{ct}}
\newcommand{\cB}{\mathcal B}

\title[Mean curvature bounds for CMC foliations]{Mean curvature bounds for CMC foliations of manifolds with sectional curvature bounded below}

\author[J. E. Sampaio]{Jos\'e Edson Sampaio}
\author[E. C. da Silva]{Euripedes Carvalho da Silva}

\address{Jos\'e Edson Sampaio:  
              Departamento de Matem\'atica, Universidade Federal do Cear\'a,
	      Rua Campus do Pici, s/n, Bloco 914, Pici, 60440-900, 
	      Fortaleza-CE, Brazil. \newline   
              E-mail: {\tt edsonsampaio@mat.ufc.br}                    
}
\address{Euripedes Carvalho da Silva: Departamento de Matem\'atica, Instituto Federal de Educa\c{c}\~ao, Ci\^encia e Tecnologia do Cear\'a,
 	      Av. Parque Central, 1315, Distrito Industrial I, 61939-140, 
 	      Maracana\'u-CE, Brazil. \newline 
 	      and 	      Departamento de Matem\'atica, Universidade Federal do Cear\'a,
	      Rua Campus do Pici, s/n, Bloco 914, Pici, 60440-900, 
	      Fortaleza-CE, Brazil.
               E-mail: {\tt euripedes.carvalho@ifce.edu.br}
 } 

\keywords{CMC Foliations, CMC hypersurfaces, Bounded sectional curvature}
\subjclass[2020]{53C12, 53C42}
\thanks{The first named author was partially supported by CNPq-Brazil grants 303375/2025-6, 404611/2025-7 and 405618/2023-9. The second named author was partially supported by CNPq-Brazil grants 400043/2025-4 and 405618/2023-9. This work was supported by the Serrapilheira Institute (grant number Serra -- R-2110-39576).
}
\begin{document}
\emergencystretch=3em

\begin{abstract}
Let $M^{n+1}$ be a complete Riemannian manifold with $\sec_M\ge-K_0$, $K_0\ge0$, and let $\mathfrak{F}$ be a codimension one foliation of $M$ by hypersurfaces of constant mean curvature, the constant depending on the leaf. We prove that the mean curvature of every leaf satisfies $|H|\le(n+1)\sqrt{K_0}$. In particular, the mean curvature of the leaves of a CMC foliation of a complete manifold with $|\sec_M|\le1$ is bounded by $n+1$, which answers positively a conjecture of Meeks III, P\'erez, and Ros.
\end{abstract}

\maketitle
\tableofcontents

\section{Introduction}\label{sec:intro}

Let $M^{n+1}$, $n\ge1$, be a complete connected Riemannian manifold, and let $\mathfrak{F}$ be a smooth codimension one foliation of $M$ by connected hypersurfaces. We call $\mathfrak{F}$ a \emph{CMC foliation} if every leaf has constant mean curvature, the constant possibly depending on the leaf. Every codimension one foliation becomes transversely orientable after passing to a covering of $M$ of degree at most two; the lifted foliation is again a CMC foliation whose leaves cover those of $\mathfrak{F}$ and have the same mean curvature up to sign, and completeness and lower sectional curvature bounds are inherited by Riemannian coverings. Since only $|H|$ will be estimated, we assume throughout that $\mathfrak{F}$ is transversely oriented. Let $N$ be a smooth unit vector field normal to the leaves and define $H\in C^\infty(M)$ by
\begin{equation}\label{eq:H}
\divg N=-nH .
\end{equation}
With the shape operator $A:=-\nabla N$, the restriction of $A$ to the tangent space of a leaf has trace $nH$ (see \S\ref{sec:prelim}), so $H$ is the mean curvature of the leaves with respect to $N$.

In 2008, Meeks III, P\'erez, and Ros \cite[Remark 5.21]{MeeksPR:2008} proposed the following conjecture.
\begin{conjecture}\label{conjecture:control_mean_curv_weaker}
For each integer $n\ge1$ there is $A=A(n+1)>0$ with the following property: for every complete Riemannian manifold $M^{n+1}$ with absolute sectional curvature bounded from above by $1$ and every codimension one CMC foliation $\mathfrak{F}$ of $M$, the mean curvature $H_L$ of each leaf $L$ of $\mathfrak{F}$ satisfies $|H_L|\le A$.
\end{conjecture}
They also conjectured that one can take $A(n+1)=1$ \cite[Conjecture 5.1(2)]{MeeksPR:2008}:
\begin{conjecture}\label{conjecture:control_mean_curv}
Let $\mathfrak{F}$ be a codimension one CMC foliation of a complete Riemannian manifold $M^{n+1}$ with absolute sectional curvature bounded from above by $1$. Then $|H_L|\le1$ for every leaf $L$ of $\mathfrak{F}$.
\end{conjecture}
Meeks III, P\'erez, and Ros proved Conjecture~\ref{conjecture:control_mean_curv} when $M$ is the hyperbolic space $\Hy^3$ \cite[Corollary 5.10]{MeeksPR:2008}, and they provided in \cite[Theorem 5.23]{MeeksPR:2008} partial positive answers to Conjecture \ref{conjecture:control_mean_curv_weaker}, they proved that when $M$ is a homogeneously regular manifold with absolute sectional curvature at most $1$ and $n=3$ or $4$, the mean curvature of the leaves of a CMC foliation of $M$ is bounded by a constant depending only on $n$. When $\dim M=3$, a bound for $|H|$ depending only on an upper bound for $|\sec_M|$ can also be derived from the universal curvature estimate for CMC foliations of compact Riemannian $3$-manifolds with boundary \cite[Theorem 1.3]{MPR16}: applied to a compact domain of $M$ containing a ball of radius $\pi$ about a point $p$, it bounds the norm of the second fundamental form of the leaves at $p$, hence their mean curvature, by a universal constant. When $M$ is compact, the sharp bound $|H|\le\sqrt{K_0}$ is known even under the weaker assumption $\Ric_M\ge-nK_0$; see \cite[Theorem 3.1]{BKO} for $K_0=0$, \cite[Theorem 5.8.C]{MeeksPR:2008}, \cite[Remark 1.4(i)]{MP16}, and \cite[Theorem 4.4]{SS}. Recently, the authors proved in \cite{SS2026} that if $\sec_M\equiv 0$, then $H\equiv 0$. 

The aim of this article is to provide a complete positive answer to Conjecture \ref{conjecture:control_mean_curv_weaker}. Indeed, we obtain the following more general result.

\begin{thmA}
Let $M^{n+1}$ be a complete Riemannian manifold with $\sec_M\ge-K_0$ for some $K_0\ge0$, and let $\mathfrak{F}$ be a codimension one CMC foliation of $M$. Then
\[
|H|\le(n+1)\sqrt{K_0}\qquad\text{on }M .
\]
In particular, if $\sec_M\ge0$, every leaf of $\mathfrak{F}$ is minimal.
\end{thmA}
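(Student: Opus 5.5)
The plan is to argue by contradiction with a divergence identity for the normal field $N$, combined with a cutoff built from a distance function whose Laplacian is controlled by the lower curvature bound. Suppose some leaf $L_0$ has $|H|=H_0>(n+1)\sqrt{K_0}$. Passing to $-N$ if necessary, we take $H_0>0$.

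\textbf{Step 1 (pointwise identity).} Since $H$ is constant on leaves, $\nabla H=N(H)\,N$. Taking the trace of the Ricci identity for $\nabla N$ gives
\[
\divg(\nabla_N N)=\Ric(N,N)+|A|^2+N(\divg N)=\Ric(N,N)+|A|^2-nN(H),
\]
where $|A|^2$ is the squared norm of the symmetric leafwise part of $A$. We also have $\divg(\varphi(H)N)=\varphi'(H)N(H)-nH\varphi(H)$. Choosing $\varphi(H)=nH$ cancels the uncontrolled term $N(H)$. For $Z:=\nabla_N N+nHN$ this yields
\[
\divg Z=\Ric(N,N)+|A|^2-n^2H^2 .
\]
Using $\Ric(N,N)\ge -nK_0$, $|A|^2\ge nH^2$ and the splitting of $A$ into trace part and trace-free part $A^0$, this becomes
\[
\divg Z\ge |A^0|^2-n(n-1)H^2-nK_0 .
\]

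\textbf{Step 2 (reversing the sign).} This lower bound is not positive, so I would work with $-Z$ together with a weight. Write $Z=\nabla_N N+nHN$ with $\nabla_N N\perp N$, so that $\langle Z,N\rangle=nH$. Along a unit-speed normal curve of the foliation, $-Z$ then pushes with normal speed $nH$. On the open set $U=\{H>(n+1)\sqrt{K_0}\}$, which is a union of leaves containing $L_0$, I would compare $\divg(-Z)$ with the divergence of a comparison field. Concretely, fix $p\in L_0$ and let $\rho$ be the distance from $p$. For a cutoff $\psi(\rho)$ supported on $B_R(p)$, integrate $\divg(\psi\,Z)$, and use
\[
|\nabla\rho|=1,\qquad \Delta\rho\le n\sqrt{K_0}\coth(\sqrt{K_0}\rho),
\]
the latter being the Laplacian comparison coming from $\Ric\ge -nK_0$. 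Here the sectional bound is used rather than only a Ricci bound, through Hessian comparison for $\rho$ along the leaves, to control $\langle \nabla_N N,\nabla\rho\rangle$ against $|A|$.

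\textbf{Step 3 (the constant).} The goal is an inequality of the form
\[
\int\psi\bigl(nH^2-(n+1)\sqrt{K_0}\,H\bigr)\le \int|\psi'|\,|Z| .
\]
Choosing $\psi=e^{-\lambda\rho}$ with $\lambda$ slightly larger than $n\sqrt{K_0}$ should let the volume growth $\lesssim e^{n\sqrt{K_0}R}$ absorb the boundary term. On the set where $H\ge H_0$, the left-hand side then dominates and yields a contradiction. The extra $\sqrt{K_0}$, beyond the compact-case bound $n\sqrt{K_0}$, comes from paying $\lambda\approx n\sqrt{K_0}$ plus one further factor $\sqrt{K_0}$ in the Hessian comparison. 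This accounts for the non-sharp constant $(n+1)\sqrt{K_0}$.

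\textbf{Main obstacle.} The hard step is controlling the unbounded term $\nabla_N N$, the curvature of the normal curves, in the boundary integral. I would first try to absorb it using $|\nabla_N N|\,|\psi'|\le \tfrac12\psi|A^0|^2+C\lambda^2\psi$. This does not work directly, because $\nabla_N N$ is not bounded by $A$. If it fails, I would switch to a maximum-principle argument on the leaf space instead. The idea there is to compare the leaf $L_0$, near a point realizing the distance to a far geodesic sphere or horosphere, with that sphere, whose mean curvature is at most $\sqrt{K_0}\coth$ by Hessian comparison. Whether $\sup H$ can be attained then needs an Omori--Yau type principle, which is available because $\sec_M\ge -K_0$.
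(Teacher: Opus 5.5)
Your proposal has a genuine gap, and it sits exactly where the paper's argument does its work.

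The first problem is the sign of the divergence. For any field $\kappa(H)N+\lambda(H)\nabla_NN$ one has $\divg(\kappa N+\lambda\nabla_NN)=(\kappa'-n\lambda)N(H)+\lambda\bigl(|A|^2+\Ric(N,N)\bigr)-nH\kappa$. Cancelling $N(H)$ therefore forces $\kappa'=n\lambda$. Your $Z$ is the choice $\lambda\equiv1$, $\kappa=nH$, and its divergence $|A|^2+\Ric(N,N)-n^2H^2$ has no useful lower bound. Passing to $-Z$ cannot fix this: $\divg(-Z)=n^2H^2-|A|^2-\Ric(N,N)$ would need \emph{upper} bounds on $|A|^2$ and $\Ric(N,N)$, and the hypotheses give only lower bounds.

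The missing idea is to keep $\kappa'=n\lambda$ but let $\kappa$ grow fast in $H$ and vanish below a threshold. Take $G=((H_+)^2-c_0^2)_+$, $\kappa=G^s$, $\lambda=\frac{2s}{n}HG^{s-1}$ with $2s>n$. Then the available bounds $|A|^2\ge nH^2$ and $\Ric(N,N)\ge-nK_0$ give
\[
\divg\bigl(\kappa N+\lambda\nabla_NN\bigr)\ \ge\ (2s-n)HG^s+2s(c_0^2-K_0)HG^{s-1}\ \ge\ 0,
\]
with everything supported in $\{H>c_0\}$. This also repairs the localization in your Step~3. There the integrand $nH^2-(n+1)\sqrt{K_0}\,H$ is negative where $0<H<(n+1)\sqrt{K_0}/n$, so dominance on $\{H\ge H_0\}$ gives no global inequality. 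That inequality is also never actually derived from Step~2.

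The second problem is the term $\nabla_NN$, which you rightly flag as the main obstacle but do not resolve. The absorption fails, as you note. Hessian comparison gives no pointwise control of $\langle\nabla_NN,\nabla\rho\rangle$ by $|A|$. The Omori--Yau/leaf-space fallback is only a sketch.

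The paper never bounds $\nabla_NN$; it integrates it away along the leaves. Since $\lambda$ is constant on leaves, $\nabla\lambda$ is parallel to $N$. Combining this with $\divg X=\divg_LX-\langle X,\nabla_NN\rangle$ for leafwise $X$ and $\lapL\chi=\tr_{TL}\Hess\chi+nH\,N(\chi)$ gives, for compactly supported $\chi$,
\[
\int_M\lambda\,\langle\nabla\chi,\nabla_NN\rangle=\int_M\lambda\,\tr_{TL}\Hess\chi+n\int_MH\lambda\,N(\chi).
\]
For $\chi=\Phi(\rho)$ with $\Phi'\le0$, this needs only an upper bound on the partial trace $\tr_{TL}\Hess\rho$. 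That is where $\sec_M\ge-K_0$ enters, rather than mere Laplacian comparison. It is used via Hessian comparison in Calabi's support sense, plus a regularization across the cut locus.

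After this step, the boundary terms are, up to lower-order corrections, multiples of $G^s+2sH^2G^{s-1}+2sC_1HG^{s-1}$, where $C_1=\ct_{K_0}(R_*)$ bounds $\Hess\rho$ far from the base point. These are dominated pointwise by the bulk term, with a constant tending to $(c_0+C_1)/(c_0^2-K_0)$ as $s\to\infty$. Beating the Bishop--Gromov rate $n\sqrt{K_0}$ then requires $n\sqrt{K_0}(c_0+C_1)<c_0^2-K_0$. For $C_1\to\sqrt{K_0}$ this is exactly $c_0>(n+1)\sqrt{K_0}$.

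Your exponential weight matches this in spirit. However, it is not compactly supported, and nothing gives integrability of the field a priori. The paper instead uses powers $\chi^j$ of compact cutoffs. The iteration $\int\chi^jY\lesssim\frac{j}{\varepsilon R}\int\chi^{j-1}Y$ produces decay of order $q!/q^q$. This is combined with an a priori bound of $\int_{\{\rho\le R\}}Y$ by $\operatorname{vol}\{\rho\le(1+\varepsilon)R\}$.
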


\begin{corollary}\label{cor:conj}
Conjecture~\ref{conjecture:control_mean_curv_weaker} holds with $A(n+1)=n+1$. More generally, if $|\sec_M|\le K$, then $|H|\le(n+1)\sqrt K$.
\end{corollary}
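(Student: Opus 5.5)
The corollary is a direct specialization of Theorem A, so the plan is simply to check that its hypotheses are met and read off the constant. Only the lower half of the two-sided curvature bound is used; the upper bound $\sec_M\le K$ plays no role.

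First I treat the general statement. Let $M^{n+1}$ be complete with $|\sec_M|\le K$ for some $K\ge0$, and let $\mathfrak{F}$ be a codimension one CMC foliation of $M$. The bound $|\sec_M|\le K$ gives in particular $\sec_M\ge -K$. So Theorem A applies with $K_0:=K\ge0$ and yields $|H|\le (n+1)\sqrt{K}$ on all of $M$.

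I also need to check the reduction to transversely oriented foliations, since $H$ as defined in \eqref{eq:H} requires a global unit normal. As explained in \S\ref{sec:intro}, we pass to the (at most) double cover $\tilde M\to M$ on which the lifted foliation is transversely oriented. This cover is again complete, and it carries the same bound $|\sec_{\tilde M}|\le K$ because the covering is a local isometry. The leaves of the lift cover leaves of $\mathfrak{F}$, with mean curvature equal up to sign. Hence the bound on $|H_L|$ descends to every leaf $L$ of $\mathfrak{F}$; Theorem A was already stated in this generality, so no new work is needed.

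Finally, for Conjecture~\ref{conjecture:control_mean_curv_weaker} I take $K=1$. Then every complete $M^{n+1}$ with $|\sec_M|\le1$ and every codimension one CMC foliation of it satisfy $|H_L|\le n+1$ for each leaf $L$. This is exactly the conjecture with $A(n+1)=n+1$, a constant depending only on $n$ and positive for every $n\ge1$. There is no real obstacle here; all the difficulty lives in Theorem A.
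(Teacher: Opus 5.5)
Your proposal is correct and is the same argument as the paper's: $|\sec_M|\le K$ gives $\sec_M\ge -K$, so Theorem A with $K_0=K$ yields $|H|\le(n+1)\sqrt{K}$, and $K=1$ gives the conjecture with $A(n+1)=n+1$. Your remark about passing to the double cover to get a transversely oriented foliation is consistent with the reduction made in the introduction, which Theorem A already incorporates.
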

\begin{proof}
$|\sec_M|\le K$ implies $\sec_M\ge-K$; apply Theorem~A with $K_0=K$.
\end{proof}

\subsection{Strategy and organization}
Fix $x_0\in M$ and let $r:=\operatorname{dist}_M(x_0,\cdot)$. For $c_0>\sqrt{K_0}$, we consider the vector field $V=G^sN+\frac{2s}nHG^{s-1}\nabla_NN$, $G=(H_+^2-c_0^2)_+$, whose divergence controls a positive quantity $Y$ supported in $\{H>c_0\}$ (Lemma~\ref{lem:divV}). Testing $\divg V\ge Y$ against powers of cutoff functions of $r$ and iterating the resulting flow inequality shows that if $\{H>c_0\}\ne\emptyset$, the integral of $Y$ over $\{r\le R\}$ must grow exponentially in $R$ at a rate that, for $c_0>c_*(n,K_0;C_1)$, exceeds the rate $n\sqrt{K_0}$ allowed by the Bishop–Gromov inequality. Here $C_1$ is an upper bound for $\Hess r$ far from $x_0$: the uncontrolled field $\nabla_NN$ is traded for second derivatives of the cutoff functions (Lemma~\ref{lem:T}), and only an upper bound for $\Hess r$ is needed. Since $r$ is not smooth, this bound is used in the support sense of Calabi; Proposition~\ref{prop:cutoff} shows that it suffices for the integration by parts. By the Hessian comparison theorem, one can take $C_1=\sqrt{K_0}\coth(\sqrt{K_0}R_*)$ outside $B_{R_*}(x_0)$, and letting $R_*\to\infty$ gives the constant $c_*(n,K_0;\sqrt{K_0})=(n+1)\sqrt{K_0}$.

Section~\ref{sec:prelim} collects the identities relating the foliation to the ambient divergence, the Hessian comparison for the distance function in the support sense, the regularization argument, and the flow field. Section~\ref{sec:proof} proves Theorem~A.

\section{Preliminaries}\label{sec:prelim}

\subsection{Notation and identities}
We write $\nabla$, $g(\cdot,\cdot):=\langle\cdot,\cdot\rangle$, $\divg$, $R$ for the Levi-Civita connection, metric, divergence, and curvature tensor of $M$, with $R(X,Y)Z=\nabla_X\nabla_YZ-\nabla_Y\nabla_XZ-\nabla_{[X,Y]}Z$, so that $\sec(X,Y)=\langle R(X,Y)Y,X\rangle$ for orthonormal $X,Y$. Extend the shape operator to $TM$ by $A:=-\nabla N$. Since $\langle\nabla_XN,N\rangle=\frac12X\langle N,N\rangle=0$, $A$ takes values in $N^\perp$. For $X,Y$ tangent to a leaf $L$, $\langle AX,Y\rangle=\langle N,\nabla_XY\rangle$, which is symmetric in $X,Y$ because $[X,Y]$ is tangent to $L$; hence $A|_{TL}$ is a symmetric endomorphism of $TL$ and, computing $\divg N$ in an orthonormal frame $\{e_1,\dots,e_n,N\}$,
\[
\tr A|_{TL}=-\sum_i\langle\nabla_{e_i}N,e_i\rangle=-\divg N+\langle\nabla_NN,N\rangle=nH .
\]
Put
\[
\mathfrak{g}:=N(H),\qquad T:=\nabla_NN,\qquad |A|^2:=\tr\big((A|_{TL})^2\big),
\]
and let $\gl$, $\nabla^L$, $\divg_L$, and $\lapL$ denote the gradient, covariant derivative, divergence, and Laplacian on a leaf, with respect to its induced metric, acting on the restrictions of the corresponding objects to the leaves. Then $T$ is tangent to the leaves, $AN=-T$, and if $k_1,\dots,k_n$ are the eigenvalues of $A|_{TL}$,
\begin{equation}\label{eq:CS}
|A|^2=\sum_ik_i^2\ \ge\ \frac1n\Big(\sum_ik_i\Big)^2=nH^2 .
\end{equation}

\begin{lemma}\label{lem:gradH}
If $\kappa\in C^1(M)$ is constant along the leaves, then $\nabla\kappa=N(\kappa)N$. In particular $\nabla H=\mathfrak{g}N$.
\end{lemma}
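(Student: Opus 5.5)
The plan is to decompose $\nabla\kappa$ at each point $p\in M$ along the orthogonal splitting $T_pM=T_pL\oplus\R N_p$, where $L$ is the leaf through $p$. This gives
\[
\nabla\kappa=\big(\nabla\kappa-\langle\nabla\kappa,N\rangle N\big)+\langle\nabla\kappa,N\rangle N ,
\]
and by definition $\langle\nabla\kappa,N\rangle=N(\kappa)$. It therefore suffices to show that the tangential component vanishes, that is, $\langle\nabla\kappa,X\rangle=0$ for every $X\in T_pL$.

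For the tangential part, fix $X\in T_pL$ and choose a $C^1$ curve $\gamma:(-\varepsilon,\varepsilon)\to L$ with $\gamma(0)=p$ and $\gamma'(0)=X$. Since the leaf $L$ is an immersed (connected) hypersurface, such a curve exists inside $L$ itself. Because $\kappa$ is constant along $L$, the function $t\mapsto\kappa(\gamma(t))$ is constant. Hence
\[
\langle\nabla\kappa,X\rangle=d\kappa_p(X)=\tfrac{d}{dt}\big|_{t=0}\kappa(\gamma(t))=0 ,
\]
and so $\nabla\kappa=N(\kappa)N$.

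For the particular case, the function $H$ defined by \eqref{eq:H} is smooth on $M$, and on each leaf $L$ it equals the mean curvature of $L$ with respect to $N$ (computed above as $\tfrac1n\tr A|_{TL}$). Since $\mathfrak{F}$ is a CMC foliation, this value is constant on each leaf. Applying the first part with $\kappa=H$ gives $\nabla H=N(H)N=\mathfrak{g}N$.

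There is no genuine obstacle here. The only point needing care is that constancy along a leaf is a statement about the leaf with its own (possibly non-embedded) topology, so the curve must be taken inside $L$ rather than merely tangent to it. Working in a foliation chart, where the plaques are slices $\{x^{n+1}=\text{const}\}$, makes this transparent: there $\kappa$ depends only on $x^{n+1}$, so its differential annihilates $T\mathfrak F$.
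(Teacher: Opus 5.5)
Your proof is correct and follows the same route as the paper: the tangential component of $\nabla\kappa$ vanishes because $X(\kappa)=0$ for every $X$ tangent to the leaves, so $\nabla\kappa=\langle\nabla\kappa,N\rangle N=N(\kappa)N$. You give more detail (the curve inside the leaf, the foliation chart) than the paper's one-line argument, but the idea is identical.
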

\begin{proof}
$X(\kappa)=0$ for every $X$ tangent to the leaves, so $\nabla\kappa$ is parallel to $N$.
\end{proof}
The following lemma was proved by Barbosa, Kenmotsu, and Oshikiri in \cite{BKO}
\begin{lemma}[Proposition 2.14 in \cite{BKO}]\label{lem:divT}
For a $C^1$ vector field $X$ tangent to the leaves, $\divg X=\divg_LX-\langle X,T\rangle$ and 
$\divg T=|A|^2+\Ric(N,N)-n\mathfrak{g}$.
\end{lemma}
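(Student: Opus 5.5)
We prove the two identities separately. Both are local computations in an adapted orthonormal frame $\{e_1,\dots,e_n,N\}$, where the $e_i$ are tangent to the leaves. The only facts needed are that $N$ has unit length, that $A|_{TL}$ is symmetric, and that the leaves are integral manifolds, so brackets of leaf-tangent fields are leaf-tangent.

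For the first identity, let $X$ be tangent to the leaves and write
\[
\divg X=\sum_i\langle\nabla_{e_i}X,e_i\rangle+\langle\nabla_NX,N\rangle .
\]
The tangential part of $\nabla_{e_i}X$ is $\nabla^L_{e_i}X$, so the first sum is $\divg_LX$. For the last term, $\langle X,N\rangle\equiv0$ gives
\[
\langle\nabla_NX,N\rangle=-\langle X,\nabla_NN\rangle=-\langle X,T\rangle .
\]

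For the second identity, note first that $T$ is tangent to the leaves, so the first part gives $\divg T=\divg_LT-|T|^2$. It then remains to compute $\divg_LT$. I would instead compute $\divg T$ directly, using the Ricci identity in the form
\[
\Ric(N,N)=\sum_i\langle R(e_i,N)N,e_i\rangle=\divg(\nabla_NN)-N(\divg N)-\tr\big((\nabla N)^2\big).
\]
This is the standard formula $\divg(\nabla_NN)-\nabla_N(\divg N)=\Ric(N,N)+\tr((\nabla N)^2)$, obtained by tracing $\nabla^2_{e_j,N}N-\nabla^2_{N,e_j}N=R(e_j,N)N$ and commuting the trace with $\nabla_N$. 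By \eqref{eq:H} and Lemma~\ref{lem:gradH},
\[
N(\divg N)=-n\,N(H)=-n\mathfrak g .
\]
Then $\divg T=\Ric(N,N)-n\mathfrak g+\tr((\nabla N)^2)$, and the task reduces to showing $\tr((\nabla N)^2)=|A|^2$.

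Since $\nabla N=-A$, we have $\tr((\nabla N)^2)=\tr(A^2)$ on $TM$. In the frame, $A$ maps $TM$ into $N^\perp$, with $Ae_i\in TL$ and $AN=-T$. Its matrix therefore has the block form
\[
\begin{pmatrix}A|_{TL}&-T\\0&0\end{pmatrix},
\]
whose square has trace $\tr((A|_{TL})^2)=|A|^2$. The off-diagonal block $-T$ contributes nothing to the trace because the bottom row vanishes. This gives
\[
\divg T=|A|^2+\Ric(N,N)-n\mathfrak g .
\]

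The main obstacle, though mild, is the sign bookkeeping in the Ricci identity under the paper's curvature convention, together with the vanishing of the $T$-contributions. If the direct commutation is awkward, a fallback is to compute $\divg_LT$ via Codazzi and add $-|T|^2$. That route requires tracking an extra $|T|^2$ term from $\nabla_{e_i}T$ against $\langle A e_i,T\rangle$, which is exactly what the block-triangular structure handles automatically.
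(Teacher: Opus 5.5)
Your proof is correct. Note that the paper gives no argument for this lemma; it imports it from Barbosa--Kenmotsu--Oshikiri \cite{BKO}. So your write-up supplies a self-contained proof where the paper has only a citation.

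The first identity is the routine frame computation: the Gauss formula handles the tangential part, and $\langle\nabla_NX,N\rangle=-\langle X,T\rangle$ handles the normal term. For the second identity, your route has two steps. You trace $\nabla^2_{e_j,N}N-\nabla^2_{N,e_j}N=R(e_j,N)N$ to get $\divg(\nabla_NN)=N(\divg N)+\Ric(N,N)+\tr((\nabla N)^2)$. You then use the block upper-triangular form of $A$ in the frame $\{e_1,\dots,e_n,N\}$ to identify $\tr((\nabla N)^2)$ with $\tr((A|_{TL})^2)$. Both steps check out with the paper's convention $\sec(X,Y)=\langle R(X,Y)Y,X\rangle$. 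This is just the classical formula $\divg\big(\nabla_NN-(\divg N)N\big)=\Ric(N,N)+\tr((\nabla N)^2)-(\divg N)^2$.

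Compared with the Codazzi-based computation of $\divg_LT$ that you mention as a fallback, this route has two advantages:
\begin{itemize}
\item No $|T|^2$ terms need tracking, since the $-T$ column never reaches the trace.
\item It shows the second identity holds for any $C^2$ unit vector field $N$, with $|A|^2$ read as $\tr((A|_{N^\perp})^2)$.
\end{itemize}
In particular, your argument for the second identity never uses the symmetry of $A|_{TL}$, despite your opening remark. Integrability enters only in the first identity, to define $\divg_L$, and later in \eqref{eq:CS}, where symmetry gives $|A|^2=\sum_ik_i^2\ge nH^2$.

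Two small points:
\begin{itemize}
\item The step $N(\divg N)=-n\mathfrak g$ is immediate from \eqref{eq:H} and the definition $\mathfrak g=N(H)$; Lemma~\ref{lem:gradH} is not needed.
\item The Ricci-identity route requires $N\in C^2$, so that $T\in C^1$ and $\nabla^2N$ make sense. This holds because the foliation is smooth.
\end{itemize}
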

The following lemma was proved by Sampaio and Silva (see \cite{SS2026}) using similar ideas. For the sake of completeness, we provide a proof.
\begin{lemma}[Lemma 2.5 in \cite{SS2026}]\label{lem:divX}
 For $f\in C^2(M)$,
\[
\lapL f=\tr_{TL}\Hess f+nH\,N(f),\qquad \tr_{TL}\Hess f:=\sum_{i=1}^n\Hess f(e_i,e_i),
\]
where $\{e_i\}$ is an orthonormal basis of the tangent space of the leaf.
\end{lemma}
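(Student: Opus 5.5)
We work pointwise on a leaf $L$, with an orthonormal frame $\{e_1,\dots,e_n\}$ of $T_pL$ completed by $N$. Write $f|_L$ for the restriction. The plan is to express the leaf Laplacian through the leaf gradient and then compare leaf covariant derivatives with ambient ones via the second fundamental form.

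\textbf{Step 1 (leaf gradient).} The gradient of $f|_L$ is the tangential projection of the ambient gradient:
\[
\gl f=\nabla f-N(f)N,
\]
since $\langle \nabla f,X\rangle=X(f)$ for every $X$ tangent to $L$.

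\textbf{Step 2 (compute $\lapL f$).} By definition,
\[
\lapL f=\sum_i\langle\nabla^L_{e_i}\gl f,e_i\rangle .
\]
Because $\nabla^L_XY$ is the tangential part of $\nabla_XY$ for $X,Y$ tangent to $L$, we have $\langle\nabla^L_{e_i}\gl f,e_i\rangle=\langle\nabla_{e_i}\gl f,e_i\rangle$. Substituting Step 1 and expanding,
\[
\nabla_{e_i}\big(\nabla f-N(f)N\big)=\nabla_{e_i}\nabla f-e_i(N(f))\,N-N(f)\nabla_{e_i}N .
\]
Now take the inner product with $e_i$, term by term. The first term gives $\langle\nabla_{e_i}\nabla f,e_i\rangle=\Hess f(e_i,e_i)$. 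The second vanishes because $\langle N,e_i\rangle=0$. The third contributes $-N(f)\langle\nabla_{e_i}N,e_i\rangle=N(f)\langle Ae_i,e_i\rangle$.

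\textbf{Step 3 (sum).} Summing over $i$ and using $\tr A|_{TL}=nH$ (established at the start of \S\ref{sec:prelim}) yields
\[
\lapL f=\tr_{TL}\Hess f+nH\,N(f).
\]

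The only point needing care is Step 2: the identification of $\nabla^L$ with the tangential part of $\nabla$ when the vector field $\gl f$ is defined on all of $M$ but restricted to leaves. This is just the Gauss formula applied leafwise, valid since $\gl f$ is tangent to each leaf and $C^1$. The sign convention $A=-\nabla N$ with trace $nH$ then gives the stated sign of the $nH\,N(f)$ term.
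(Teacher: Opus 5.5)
Your argument is correct and is essentially the paper's proof: both are a one-line application of the Gauss formula together with $\tr A|_{TL}=nH$. The only cosmetic difference is that the paper splits $\nabla_{e_i}e_i=\nabla^L_{e_i}e_i+\langle Ae_i,e_i\rangle N$ inside $\Hess f(e_i,e_i)$, while you split $\nabla f=\gl f+N(f)N$ and let the derivative fall on $N$. This is the same identity read through $\lapL f=\divg_L(\gl f)$, which is the form actually used in Lemma~\ref{lem:T}.
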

\begin{proof}
In a frame $\{e_1,\dots,e_n,N\}$ as above, the $e_i$ pertains to local vector fields tangent to the leaves. Then $\lapL f=\sum_i\big(e_ie_if-(\nabla^L_{e_i}e_i)f\big)$, $\Hess f(e_i,e_i)=e_ie_if-(\nabla_{e_i}e_i)f$, and $\nabla_{e_i}e_i=\nabla^L_{e_i}e_i+\langle\nabla_{e_i}e_i,N\rangle N=\nabla^L_{e_i}e_i+\langle Ae_i,e_i\rangle N$. Summing over $i$ gives $\tr_{TL}\Hess f=\lapL f-nH\,N(f)$.
\end{proof}

The following lemma was proved in \cite[Lemma 3.2]{SS2026} when $M=\mathbb{R}^{n+1}$. The proof works along the same lines, but since it is short, for completeness, we present it here.
\begin{lemma}[trading $T$ for Hessians]\label{lem:T}
For $\chi\in C^2_c(M)$ and $\kappa\in C^1(M)$ constant along the leaves,
\begin{equation}\label{eq:T}
\int_M\kappa\langle\nabla\chi,T\rangle=\int_M\kappa\,\tr_{TL}\Hess\chi+n\int_MH\kappa\,N(\chi).
\end{equation}
\end{lemma}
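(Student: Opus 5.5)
The plan is to apply the divergence theorem to the compactly supported $C^1$ vector field $\kappa\nabla\chi$, after splitting $\nabla\chi$ into its normal and leafwise parts. Write
\[
\nabla\chi=N(\chi)N+\gl\chi ,
\]
where $\gl\chi$ denotes the projection of $\nabla\chi$ onto $N^\perp$, which is the leafwise gradient of $\chi$ restricted to each leaf. Since $T$ is tangent to the leaves, $\langle\nabla\chi,T\rangle=\langle\gl\chi,T\rangle$.

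The first step is to compute the divergence of the tangent field $X:=\kappa\gl\chi$, which is $C^1$ because $\chi\in C^2$ and $\kappa\in C^1$. By Lemma~\ref{lem:divT},
\[
\divg X=\divg_L X-\langle X,T\rangle .
\]
Because $\kappa$ is constant along leaves, $\divg_L(\kappa\gl\chi)=\kappa\lapL\chi$. Lemma~\ref{lem:divX} then gives
\[
\divg X=\kappa\,\tr_{TL}\Hess\chi+nH\kappa N(\chi)-\kappa\langle\gl\chi,T\rangle .
\]
Since $X$ has compact support, $\int_M\divg X=0$. Integrating and rearranging yields exactly \eqref{eq:T}, once we use $\langle\gl\chi,T\rangle=\langle\nabla\chi,T\rangle$.

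The only delicate point is regularity. We have $\kappa\in C^1$ and $\chi\in C^2$, so $X$ is $C^1$, which is enough for the divergence theorem. Lemma~\ref{lem:divX} needs $f=\chi\in C^2$, which holds. The identity $\divg_L(\kappa\gl\chi)=\kappa\lapL\chi$ uses only that $\gl\kappa=0$, and this follows from Lemma~\ref{lem:gradH}. No boundary terms appear because $\chi$ is compactly supported. I do not expect a genuine obstacle here; the main thing to check is that Lemma~\ref{lem:divT} is being applied to a $C^1$ tangent field, which it is.
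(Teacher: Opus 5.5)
Your proof is correct and is essentially the paper's argument. The paper applies Lemma~\ref{lem:divT} to $\gl\chi$ and then uses $\langle\nabla\kappa,\gl\chi\rangle=0$ to move $\kappa$ inside the ambient divergence, whereas you apply Lemma~\ref{lem:divT} directly to $\kappa\gl\chi$ and use that $\kappa$ is constant on leaves inside the leafwise divergence; this is only a reordering, and both conclude by integrating the divergence of the same compactly supported $C^1$ field $\kappa\gl\chi$ to zero.
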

\begin{proof}
The field $\gl\chi:=\nabla\chi-N(\chi)N$ is $C^1$, compactly supported and tangent to the leaves, and its restriction to each leaf is the gradient of $\chi|_L$, so $\divg_L(\gl\chi)=\lapL\chi$. Since $T$ is tangent to the leaves, $\langle\gl\chi,T\rangle=\langle\nabla\chi,T\rangle$. By Lemmas~\ref{lem:divT} and \ref{lem:divX},
\[
\langle\nabla\chi,T\rangle=\lapL\chi-\divg(\gl\chi)=\tr_{TL}\Hess\chi+nH\,N(\chi)-\divg(\gl\chi).
\]
Multiply by $\kappa$. By Lemma~\ref{lem:gradH}, $\langle\nabla\kappa,\gl\chi\rangle=0$, so $\kappa\divg(\gl\chi)=\divg(\kappa\gl\chi)$, and the divergence of the compactly supported $C^1$ field $\kappa\gl\chi$ has integral zero.
\end{proof}

\subsection{The distance function and the Hessian comparison in the support sense}
The following notion goes back to Calabi \cite{Cal}; see also \cite{Wu}.

\begin{definition}\label{def:support}
Let $W\subset M$ be open, $f\in C^0(W)$, $x\in W$ and $a\in\R$. We say that $\Hess f(x)\le a\langle\cdot,\cdot\rangle$ \emph{in the support sense} if for every $\varepsilon>0$ there are a neighbourhood $U\subset W$ of $x$ and $\varphi\in C^2(U)$ (an \emph{upper support function} of $f$ at $x$) such that
\[
\varphi\ge f\ \text{ on }U,\qquad \varphi(x)=f(x),\qquad \Hess\varphi(x)\le(a+\varepsilon)\langle\cdot,\cdot\rangle .
\]
\end{definition}

If $f$ is $C^2$ near $x$, this is equivalent to $\Hess f(x)\le a\langle\cdot,\cdot\rangle$: $f$ itself is an upper support function, and conversely $\varphi-f$ has a local minimum at $x$, so $\Hess f(x)\le\Hess\varphi(x)$.

For \(K_0\ge 0\) and \(t>0\), let $\ct_{K_0}(t)$ be the function defined by:
\[
\operatorname{ct}_{K_0}(t)=
\begin{cases}
\sqrt{K_0}\coth(\sqrt{K_0}\,t),
& K_0>0,\\[4pt]
\dfrac1t,&K_0=0.
\end{cases}
\]
The function $\ct_{K_0}$ is continuous and decreasing on $(0,\infty)$. Moreover,
\[
\lim_{t\to\infty}\ct_{K_0}(t)=
\begin{cases}
\sqrt{K_0}, & \text{if } K_0>0,\\
0, & \text{if } K_0=0.
\end{cases}
\]
We denote by $\Cut(p)$ the cut locus of $p\in M$ and define the distance function $r_p\colon M\to[0,\infty)$ by
\[
r_p(x):=\operatorname{dist}_M(p,x).
\]
\begin{theorem}[Hessian comparison; see \cite{Pet}, \cite{GWpole}, \cite{EH}]\label{thm:hesscomp}
Let $M$ be complete with $\sec_M\ge-K_0$, $p\in M$ and $y\in M\setminus(\Cut(p)\cup\{p\})$. Then $r_p$ is smooth near $y$ and
\[
\Hess r_p(y)\le\ct_{K_0}\big(r_p(y)\big)\big(\langle\cdot,\cdot\rangle-dr_p\otimes dr_p\big)\le\ct_{K_0}\big(r_p(y)\big)\langle\cdot,\cdot\rangle .
\]
\end{theorem}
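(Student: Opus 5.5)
We prove Theorem~\ref{thm:hesscomp} by the classical Jacobi field / Riccati comparison along the minimizing geodesic from $p$ to $y$. Because $y\notin\Cut(p)\cup\{p\}$, $y=\exp_p(tv)$ with $t=r_p(y)$, $|v|=1$, and $\exp_p$ is a diffeomorphism from a neighbourhood of $tv$ in the interior of the segment domain onto a neighbourhood of $y$. Hence $r_p=|\exp_p^{-1}(\cdot)|$ is smooth near $y$, with $\nabla r_p=\gamma'$ along $\gamma(s)=\exp_p(sv)$. Since $\nabla_{\nabla r_p}\nabla r_p=0$, we have $\Hess r_p(\gamma',\cdot)=0$. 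It therefore suffices to bound $\Hess r_p(y)(X,X)$ for unit $X\perp\gamma'(t)$.

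For such $X$, let $J$ be the Jacobi field along $\gamma$ with $J(0)=0$ and $J(t)=X$. It exists and is unique because there are no conjugate points on $\gamma|_{[0,t]}$, and it is orthogonal to $\gamma'$. The standard computation gives
\[
\Hess r_p(X,X)=\langle J'(t),J(t)\rangle=I_t(J,J):=\int_0^t\big(|J'|^2-\langle R(J,\gamma')\gamma',J\rangle\big)\,ds .
\]
To get this, write $\nabla_X\nabla r_p=\nabla_{\gamma'}J$ using the variation through geodesics from $p$, then integrate by parts using the Jacobi equation.

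The index lemma says that, in the absence of conjugate points, $J$ minimizes $I_t$ among piecewise smooth fields $W\perp\gamma'$ with $W(0)=0$ and $W(t)=X$. Take $W(s)=\frac{\operatorname{sn}(s)}{\operatorname{sn}(t)}E(s)$, where $E$ is the parallel transport of $X$ along $\gamma$ and $\operatorname{sn}(s)=\sinh(\sqrt{K_0}s)/\sqrt{K_0}$ (or $s$ when $K_0=0$). Using $\sec_M\ge-K_0$, i.e.\ $\langle R(E,\gamma')\gamma',E\rangle\ge-K_0$, we get
\[
I_t(W,W)\le\frac1{\operatorname{sn}(t)^2}\int_0^t\big(\operatorname{sn}'^2+K_0\operatorname{sn}^2\big)\,ds=\frac{\operatorname{sn}(t)\operatorname{sn}'(t)}{\operatorname{sn}(t)^2}=\ct_{K_0}(t).
\]
The integral is evaluated via $(\operatorname{sn}\operatorname{sn}')'=\operatorname{sn}'^2+K_0\operatorname{sn}^2$. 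Combining this with $\Hess r_p(\gamma',\cdot)=0$ yields
\[
\Hess r_p(y)\le\ct_{K_0}(r_p(y))\big(\langle\cdot,\cdot\rangle-dr_p\otimes dr_p\big).
\]
Polarization is not needed, since the Hessian is symmetric and we have bounded its quadratic form on $\gamma'^\perp$ with the $\gamma'$-block vanishing. The second inequality is immediate because $\ct_{K_0}>0$ and $dr_p\otimes dr_p\ge0$.

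The main point requiring care is the index lemma, i.e.\ the minimizing property of Jacobi fields. This is where the hypothesis $y\notin\Cut(p)$ (no conjugate points up to $t$) enters. We quote it from \cite{Pet} rather than reprove it.
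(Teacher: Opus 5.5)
Your proof is correct. The ingredients are all sound:
\begin{itemize}
\item smoothness of $r_p$ on $M\setminus(\Cut(p)\cup\{p\})$ via the diffeomorphism $\exp_p$ on the interior of the segment domain;
\item vanishing of the radial block, since $|\nabla r_p|=1$;
\item the identity $\Hess r_p(X,X)=\langle J'(t),J(t)\rangle=I_t(J,J)$, using the variation through unit-speed geodesics from $p$ (possible by the Gauss lemma since $X\perp\gamma'(t)$);
\item the index lemma, valid because $\gamma|_{(0,t]}$ has no conjugate points when $y\notin\Cut(p)$;
\item the test field $\frac{\operatorname{sn}(s)}{\operatorname{sn}(t)}E(s)$, which together with $(\operatorname{sn}\operatorname{sn}')'=(\operatorname{sn}')^2+K_0\operatorname{sn}^2$ gives exactly $\ct_{K_0}(t)$ on $(\gamma')^\perp$.
\end{itemize}

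The paper does not prove this theorem; it only quotes it from the cited references. Your index-form argument is the classical proof of that result, and some of those sources present it equivalently as a Riccati comparison for $\Hess r_p$ along radial geodesics.
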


We also use two standard facts about cut loci (see \cite[Ch.~XIII, \S2]{dC}, \cite[Ch.~5]{CE}):
\begin{itemize}
\item[(C1)] if $\gamma\colon[0,\ell]\to M$ is a minimizing geodesic from $p=\gamma(0)$, then $\gamma(t)\notin\Cut(p)$ for $0<t<\ell$;
\item[(C2)] $q\in\Cut(p)$ if and only if $p\in\Cut(q)$.
\end{itemize}

The following lemma follows from Calabi's trick in \cite{Cal}, see also \cite{Wu} and \cite{EH}. For the sake of completeness, we provide a proof.
\begin{lemma}\label{lem:calabi}
Let $M$ be complete with $\sec_M\ge-K_0$, $x_0\in M$, and $r:=\operatorname{dist}_M(x_0,\cdot)$. Then for every $x\in M\setminus\{x_0\}$,
\[
\Hess r(x)\le\ct_{K_0}\big(r(x)\big)\langle\cdot,\cdot\rangle
\]
in the support sense. In particular, for every $R_*>0$, $\Hess r\le\ct_{K_0}(R_*)\langle\cdot,\cdot\rangle$ in the support sense at every point of $\{r>R_*\}$.
\end{lemma}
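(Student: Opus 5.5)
Fix $x\in M\setminus\{x_0\}$ and $\varepsilon>0$. We use Calabi's trick: replace $r$ near $x$ by a distance function from a point slightly closer to $x$ along a minimizing geodesic, so that $x$ avoids the cut locus of the new base point. By completeness (Hopf–Rinow), choose a unit-speed minimizing geodesic $\gamma\colon[0,\ell]\to M$ from $x_0=\gamma(0)$ to $x=\gamma(\ell)$, where $\ell=r(x)>0$. For small $\delta\in(0,\ell)$ set $p_\delta:=\gamma(\delta)$ and define
\[
\varphi_\delta:=\delta+r_{p_\delta}.
\]

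First, $\varphi_\delta$ is an upper barrier for $r$ touching at $x$. By the triangle inequality, $r(y)\le \operatorname{dist}(x_0,p_\delta)+r_{p_\delta}(y)=\delta+r_{p_\delta}(y)$ for all $y$. Equality holds at $y=x$, since $\gamma|_{[\delta,\ell]}$ is minimizing, so $r_{p_\delta}(x)=\ell-\delta$.

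Second, $\varphi_\delta$ is $C^2$ near $x$. The restriction $\gamma|_{[\delta,\ell]}$ is a minimizing geodesic from $x$ back to $p_\delta$ (reversed), and it extends as a minimizing geodesic beyond $p_\delta$ to $x_0$. Thus the reversed curve $t\mapsto\gamma(\ell-t)$, $t\in[0,\ell]$, is minimizing from $x$, and $p_\delta$ is an interior point of it. By (C1) applied with base point $x$, $p_\delta\notin\Cut(x)$. By (C2), $x\notin\Cut(p_\delta)$. Also $x\ne p_\delta$. Theorem~\ref{thm:hesscomp} then shows that $r_{p_\delta}$ is smooth on a neighbourhood $U$ of $x$, and
\[
\Hess\varphi_\delta(x)=\Hess r_{p_\delta}(x)\le\ct_{K_0}(\ell-\delta)\langle\cdot,\cdot\rangle .
\]

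Third, we choose $\delta$. Since $\ct_{K_0}$ is continuous on $(0,\infty)$, we can take $\delta$ small enough that $\ct_{K_0}(\ell-\delta)\le\ct_{K_0}(\ell)+\varepsilon$. Then $\varphi_\delta$ is an upper support function of $r$ at $x$ in the sense of Definition~\ref{def:support} with $a=\ct_{K_0}(r(x))$, which proves the first claim.

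For the \emph{in particular}, let $r(x)>R_*$. Since $\ct_{K_0}$ is decreasing, $\ct_{K_0}(r(x))\le\ct_{K_0}(R_*)$, so the same support functions also witness $\Hess r(x)\le\ct_{K_0}(R_*)\langle\cdot,\cdot\rangle$.

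The only delicate point is the cut-locus step, i.e. ensuring that $x$ is not conjugate to, or beyond the cut point of, $p_\delta$. This is exactly what (C1) combined with the symmetry (C2) provides.
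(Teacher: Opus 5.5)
Your proposal is correct and follows the paper's proof. It uses the same Calabi barrier $\delta+r_{p_\delta}$ with $p_\delta=\gamma(\delta)$, applies (C1) to the reversed geodesic and then (C2) to get $x\notin\Cut(p_\delta)$, and invokes Hessian comparison at $x$. It then uses continuity of $\ct_{K_0}$ to absorb the shift $\ell-\delta$, and monotonicity of $\ct_{K_0}$ for the ``in particular''.
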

\begin{proof}
Fix $x\in M\setminus\{x_0\}$ and let $r(x)=\operatorname{dist}_M(x_0,x)>0.$ By completeness and the Hopf-Rinow theorem, there exists a unit-speed minimizing geodesic
$\gamma\colon[0,r(x)]\to M$ such that $\gamma(0)=x_0$ and $\gamma(r(x))=x$. Fix $\varepsilon\in(0,r(x))$ and set
\[
p_\varepsilon:=\gamma(\varepsilon),
\qquad
f_\varepsilon:=\varepsilon+r_{p_\varepsilon},
\]
where $r_{p_\varepsilon}(y):=\operatorname{dist}_M(p_\varepsilon,y)$. By the triangle inequality, for every $y\in M$,
\[
r(y)=\operatorname{dist}_M(x_0,y)
\leq \operatorname{dist}_M(x_0,p_\varepsilon)
+\operatorname{dist}_M(p_\varepsilon,y)
=f_\varepsilon(y),
\]
since $\operatorname{dist}_M(x_0,p_\varepsilon)=\varepsilon$.
Moreover, as $\gamma$ is minimizing, its restriction to
$[\varepsilon,r(x)]$ is also minimizing. Hence
\[
r_{p_\varepsilon}(x)=r(x)-\varepsilon,
\]
and consequently
\[
f_\varepsilon(x)=\varepsilon+r_{p_\varepsilon}(x)=r(x).
\]
Thus $f_\varepsilon\geq r$ on $M$ and $f_\varepsilon(x)=r(x)$. We next show that $x\notin\Cut(p_\varepsilon)$. Consider the reversed geodesic
\[
\widetilde{\gamma}(t):=\gamma(r(x)-t),
\qquad t\in[0,r(x)].
\]
It is a minimizing geodesic from $x$ to $x_0$, and
$p_\varepsilon=\widetilde{\gamma}(r(x)-\varepsilon)$ is an interior
point of this geodesic. By property~(C1), $p_\varepsilon\notin\Cut(x)$. The symmetry of the cut locus, property~(C2), then yields $x\notin\Cut(p_\varepsilon).$ Since $x\neq p_\varepsilon$, the distance function
$r_{p_\varepsilon}$ is smooth in a neighborhood of $x$. Therefore,
$f_\varepsilon$ is smooth near $x$, and
\[
\operatorname{Hess}f_\varepsilon(x)
=\operatorname{Hess}r_{p_\varepsilon}(x).
\]
Applying the Hessian comparison theorem at $x$, with base point
$p_\varepsilon$, gives
\[
\operatorname{Hess}f_\varepsilon(x)
\leq
\ct_{K_0}\big(r_{p_\varepsilon}(x)\big)\langle\cdot,\cdot\rangle_x
=
\ct_{K_0}\big(r(x)-\varepsilon\big)\langle\cdot,\cdot\rangle_x.
\]

Now let $\delta>0$ be arbitrary. Since $r(x)>0$ and
$\ct_{K_0}$ is continuous on $(0,\infty)$, we may choose
$\varepsilon\in(0,r(x))$ sufficiently small so that
\[
\ct_{K_0}\big(r(x)-\varepsilon\big)
\leq \ct_{K_0}\big(r(x)\big)+\delta.
\]
For this choice of $\varepsilon$, the smooth function
$f_\varepsilon$ satisfies
\[
f_\varepsilon\geq r,\qquad f_\varepsilon(x)=r(x),
\]
and
\[
\operatorname{Hess}f_\varepsilon(x)
\leq
\big(\ct_{K_0}(r(x))+\delta\big)\langle\cdot,\cdot\rangle_x.
\]
Since $\delta>0$ is arbitrary, the definition of the support sense
implies that
\[
\operatorname{Hess}r(x)
\leq \ct_{K_0}(r(x))\langle\cdot,\cdot\rangle_x
\]
in the support sense. As $x\in M\setminus\{x_0\}$ was arbitrary,
the first assertion follows.

Finally, let $R_*>0$ and suppose that $r(x)>R_*$. Since
$\ct_{K_0}$ is decreasing,
\[
\ct_{K_0}(r(x))\leq \ct_{K_0}(R_*).
\]
The first assertion therefore implies
\[
\operatorname{Hess}r(x)
\leq \ct_{K_0}(R_*)\langle\cdot,\cdot\rangle_x
\]
in the support sense at every $x\in\{r>R_*\}$. This proves the
second assertion.
\end{proof}

\subsection{Regularization and the cutoff inequality}

The following result follows from the divergence theorem and its extension to vector fields with locally Lipschitz components; see
\cite{PigolaSetti,EvansGariepy2015,Federer1969}.
\begin{lemma}\label{lem:lipdiv}
Let $W$ be a compactly supported vector field on $M$ whose components in local coordinates are locally Lipschitz. Then $\divg W\in L^\infty(M)$ is defined almost everywhere and $\int_M\divg W=0$.
\end{lemma}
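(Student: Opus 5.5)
The plan is to reduce Lemma~\ref{lem:lipdiv} to the classical divergence theorem for $C^1$ compactly supported fields by mollifying in coordinate charts and passing to the limit with Rademacher's theorem.

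First, fix a finite atlas $\{(U_\alpha,x_\alpha)\}_{\alpha=1}^m$ covering the compact set $\supp W$, and a smooth partition of unity $\{\psi_\alpha\}$ subordinate to it on a neighbourhood of $\supp W$. Write $W=\sum_\alpha W_\alpha$ with $W_\alpha:=\psi_\alpha W$. Each $W_\alpha$ is compactly supported in $U_\alpha$ and has locally Lipschitz, hence globally Lipschitz, components. Since divergence is linear and a.e.\ defined for each piece, it suffices to treat one $W_\alpha$.

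In coordinates, with $\mu=\sqrt{\det g}$, we have
\[
\divg W_\alpha=\frac1{\mu}\partial_i(\mu W_\alpha^i).
\]
By Rademacher's theorem, the Lipschitz functions $\mu W_\alpha^i$ are differentiable a.e., and their derivatives are bounded. Hence $\divg W_\alpha$ is defined a.e.\ and lies in $L^\infty$, which gives the first claim.

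For the integral, note that $\int_M\divg W_\alpha=\int_{\R^{n+1}}\partial_i(\mu W_\alpha^i)\,dx$. Here the a.e.\ derivative coincides with the distributional derivative for Lipschitz functions (they are $W^{1,\infty}$). Therefore
\[
\int\partial_i(\mu W_\alpha^i)\,dx=-\int \mu W_\alpha^i\,\partial_i\eta\,dx
\]
for any $\eta\in C_c^\infty$ with $\eta\equiv1$ on the support, and the right side is $0$.

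Alternatively, one can mollify: $\mu W_\alpha^i*\rho_\varepsilon$ is smooth and compactly supported in $x_\alpha(U_\alpha)$ for small $\varepsilon$. Its partial derivatives equal $(\partial_i(\mu W_\alpha^i))*\rho_\varepsilon$, which are uniformly bounded and converge a.e. Dominated convergence then transfers the zero integral from the smooth case.

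The only delicate step is the identification of the pointwise a.e.\ derivative with the weak derivative for Lipschitz functions. I would cite this from \cite{EvansGariepy2015} rather than reprove it. Summing over $\alpha$ gives $\int_M\divg W=0$.
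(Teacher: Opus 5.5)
Your proof is correct and matches the paper's route: partition of unity, the coordinate formula $\divg W_\alpha=(\det g)^{-1/2}\partial_i\big((\det g)^{1/2}W_\alpha^i\big)$, Rademacher's theorem for the $L^\infty$ bound, and the identification of a.e.\ and weak derivatives of Lipschitz functions to make $\int\partial_i\big((\det g)^{1/2}W^i_\alpha\big)\,dx$ vanish. The paper gives no proof of its own and only cites this Lipschitz extension of the divergence theorem (Evans--Gariepy, Federer, Pigola--Setti), so your proposal supplies the standard details behind that citation.
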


\begin{remark}\label{rem:GW}
Lemmas~\ref{lem:1d} and~\ref{lem:reg} below are essentially known.
For $\beta\equiv0$, Lemma~\ref{lem:1d} is the classical fact that a
continuous function of one variable admitting upper support functions
with arbitrarily small second derivative is concave; see
\cite[Section~1, Lemmas~2 and~4]{Wu}, where the proofs are omitted as
standard. The general case reduces to this one by subtracting a $C^2$
function $B$ with $B''=\beta$.

Lemma~\ref{lem:reg} is a variant, with Euclidean convolution in normal
coordinates in place of Riemannian convolution, of the smoothing
results of Greene and Wu \cite[\S2]{GW79}. Indeed, if $f$ is locally
$L$-Lipschitz and $\operatorname{Hess} f\le a\langle\cdot,\cdot\rangle$
in the support sense on $W$, then for every $\eta>0$ the function $-f$
is $(-a-\eta)$-convex on $W$ in the sense of \cite[p.~60]{GW79}. To see
this, apply \cite[Lemmas~2 and~4]{Wu} near each $p\in W$ to
$-f+\tfrac12\,(a+\eta/2)\operatorname{dist}_M(p,\cdot)^2$.
Hence, by \cite[pp.~57, 59, 60--61]{GW79}, the Riemannian convolutions
$f_\varepsilon$ of $f$ are smooth near any compact set $K\subset W$,
converge to $f$ uniformly on $K$, and, for all sufficiently small
$\varepsilon$, satisfy $|\nabla f_\varepsilon|<L+\eta$ and
$\operatorname{Hess} f_\varepsilon\le (a+\eta)\langle\cdot,\cdot\rangle$
near $K$ (see also \cite{GW74}).

We also use the almost everywhere convergence
$\nabla f_\varepsilon\to\nabla f$, which is not stated in \cite{GW79}.
It follows from the uniform semiconcavity of the approximations in
local charts and \cite[Theorem~24.5]{Rockafellar}. Since we need these
facts in the precise form stated above, we give self-contained proofs
of Lemmas~\ref{lem:1d} and~\ref{lem:reg} in Appendix~\ref{app:reg}.
\end{remark}

\begin{lemma}\label{lem:1d}
Let $I\subset\mathbb{R}$ be an open interval, and let
$v,\beta\in C^0(I)$. Assume that for every $t_0\in I$ and every
$\varepsilon>0$, there exists a $C^2$ function $\phi$ defined on a
neighborhood of $t_0$ such that
\[
\phi\geq v,\qquad \phi(t_0)=v(t_0),\qquad
\phi''(t_0)\leq\beta(t_0)+\varepsilon.
\]
Then $v''\leq\beta$ in the sense of distributions on $I$; that is,
\[
\int_I v(t)\zeta''(t)\,dt
\leq
\int_I \beta(t)\zeta(t)\,dt
\]
for every nonnegative $\zeta\in C_c^\infty(I)$.
\end{lemma}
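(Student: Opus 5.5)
The plan is to reduce to the case $\beta\equiv0$ and then run the classical maximum-principle argument that a continuous function with upper support functions of arbitrarily small second derivative is concave.

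\emph{Reduction.} Fix $c\in I$ and set $B(t):=\int_c^t(t-s)\beta(s)\,ds$, so $B\in C^2(I)$ and $B''=\beta$. Put $w:=v-B\in C^0(I)$. Given $t_0$ and $\varepsilon>0$, take $\phi$ as in the hypothesis and set $\psi:=\phi-B$. Then $\psi\ge w$ near $t_0$, $\psi(t_0)=w(t_0)$, and $\psi''(t_0)\le\varepsilon$. Suppose we show that $w$ is concave on $I$. Then $w''\le0$ in distributions, i.e.\ $\int_I w\zeta''\le0$ for $\zeta\ge0$ in $C_c^\infty(I)$. Since $\int_I B\zeta''=\int_I\beta\zeta$ by two integrations by parts, this gives the claim. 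The concavity-implies-$w''\le0$ step is standard: on $\supp\zeta$ use
\[
\zeta''(t)=\lim_{h\to0}\frac{\zeta(t+h)-2\zeta(t)+\zeta(t-h)}{h^2}
\]
uniformly, shift the difference quotient onto $w$, and use $w(t+h)+w(t-h)-2w(t)\le0$ together with $\zeta\ge0$.

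\emph{Concavity of $w$.} Fix $a<b$ in $I$ and let $\ell$ be the affine function with $\ell=w$ at $a$ and at $b$. We must show $w\ge\ell$ on $[a,b]$. For $\eta>0$ consider
\[
u(t):=w(t)-\ell(t)+\eta(t-a)(t-b),
\]
which is continuous on $[a,b]$ and vanishes at the endpoints. Suppose $\min u<0$, attained at an interior point $t_0$. Take $\psi$ as above at $t_0$ with $\varepsilon:=\eta$. The function
\[
\psi-\ell+\eta(t-a)(t-b)
\]
lies above $u$ near $t_0$ and touches it at $t_0$, so it has a local minimum at $t_0$. Hence its second derivative at $t_0$ is $\ge0$, i.e.\ $\psi''(t_0)+2\eta\ge0$. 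This is not yet a contradiction, so the constants need adjusting: choose $\varepsilon:=\eta/2$, which gives $\psi''(t_0)+2\eta\le\eta/2+2\eta$. That bound is also not contradictory, so the sign of the quadratic must be flipped. Use instead
\[
u(t):=w(t)-\ell(t)-\eta(t-a)(t-b),
\]
which is still $0$ at the endpoints and satisfies $u\le w-\ell$ on $[a,b]$ because $(t-a)(t-b)\le0$ there. If $\min u<0$ at an interior $t_0$, the touching function $\psi-\ell-\eta(t-a)(t-b)$ has a local minimum at $t_0$, so
\[
\psi''(t_0)-2\eta\ge0.
\]
But $\psi''(t_0)\le\eta$, a contradiction. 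Thus $w-\ell\ge u\ge0$ on $[a,b]$ fails to be needed; what we have is
\[
w-\ell\ \ge\ \eta(t-a)(t-b)\qquad\text{on }[a,b].
\]
Letting $\eta\to0$ gives $w\ge\ell$, so $w$ is concave.

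The only delicate point is this choice of the sign of the quadratic perturbation and of $\varepsilon$ relative to $\eta$; everything else is routine.
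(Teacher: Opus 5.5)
Your proof is correct and is essentially the paper's: subtract $B$ with $B''=\beta$, prove concavity of $w=v-B$ by a quadratically perturbed extremum argument in which an upper support function with second derivative at most $\eta$ is forced to have second derivative at least $2\eta$, and finish with second difference quotients. Your $u$ is exactly $-\tilde h$ in the paper's notation; the only difference is that you let $\eta\to0$ in $w-\ell\ge\eta(t-a)(t-b)$ instead of arguing by contradiction with $\delta(b-a)^2/4$ smaller than the gap $m$.

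In a final write-up, delete the false starts. Also correct the aside ``$u\le w-\ell$'', which has the inequality reversed: on $[a,b]$ one has $u\ge w-\ell$. Your concluding step uses only $u\ge0$, so nothing depends on that aside.
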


Fix a mollifier $\mu\in C^\infty_c(\R^{n+1})$, $\mu\ge0$, $\supp\mu\subset B(0,1)$, $\int\mu=1$, and put $\mu_\delta(z):=\delta^{-n-1}\mu(z/\delta)$. For a symmetric matrix $S$ we write $\|S\|$ for its operator norm, and for the Christoffel symbols in a chart we put $\|\Gamma(y)\|:=\sup\{|\Gamma^k_{ij}(y)\xi^i\xi^j\zeta_k|:|\xi|=|\zeta|=1\}$ (Euclidean norms).

\begin{lemma}[regularization in normal coordinates]\label{lem:reg}
Let $W\subset M$ be open, $a\in\R$, $L>0$, and let $f\in C^0(W)$ be such that every point of $W$ has a neighbourhood on which $f$ is $L$-Lipschitz, and $\Hess f\le a\langle\cdot,\cdot\rangle$ in the support sense at every point of $W$. Let $K\subset W$ be compact and $\eta>0$. Then there are $\sigma>0$ and points $c_1,\dots,c_m\in K$ such that, with $U_i:=B_{2\sigma}(c_i)$:
\begin{enumerate}
\item[\textup{(i)}] $K\subset U_1\cup\dots\cup U_m$, and $\exp_{c_i}$ maps the Euclidean ball $B(0,4\sigma)\subset T_{c_i}M\cong\R^{n+1}$ (identified by an orthonormal basis) diffeomorphically onto the geodesic ball $B_{4\sigma}(c_i)\subset W$;
\item[\textup{(ii)}] for $0<\delta<\sigma$ the functions $f_{i,\delta}:=(u_i*\mu_\delta)\circ\exp_{c_i}^{-1}$, $u_i:=f\circ\exp_{c_i}$, are smooth on $B_{3\sigma}(c_i)$, $|\nabla f_{i,\delta}|\le2L$ there, $f_{i,\delta}\to f$ uniformly on $\overline U_i$ and $\nabla f_{i,\delta}\to\nabla f$ a.e. on $U_i$ as $\delta\to0$;
\item[\textup{(iii)}] there are $\omega_i(\delta)\to0$ such that $\Hess f_{i,\delta}\le(a+\eta+\omega_i(\delta))\langle\cdot,\cdot\rangle$ on $\overline U_i$.
\end{enumerate}
\end{lemma}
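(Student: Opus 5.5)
We prove Lemma~\ref{lem:reg} by working in normal charts and reducing the support-sense Hessian bound to a Euclidean semiconcavity statement for $u_i=f\circ\exp_{c_i}$, which mollification preserves.

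\emph{Choice of $\sigma$.} By compactness of $K$ and continuity of the injectivity radius, pick $\sigma>0$ with three properties. First, $4\sigma$ is less than the injectivity radius at every point of a neighbourhood of $K$. Second, $B_{4\sigma}(c)\subset W$ for all $c\in K$, and $f$ is $L$-Lipschitz on each such ball; the latter uses a Lebesgue number for the cover of $K$ by the Lipschitz neighbourhoods. Third, in the normal chart at any $c\in K$ on $B(0,4\sigma)$, three estimates hold:
\[
\|g_{ij}-\delta_{ij}\|\le\eta',\qquad \|\Gamma\|\le C\sigma,
\]
and $d\exp_c$ is $(1+\eta')$-bi-Lipschitz. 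Here $\eta'$ is small in terms of $\eta,L,a$. This is possible because $g_{ij}(0)=\delta_{ij}$, $\Gamma(0)=0$, and all bounds are uniform over $K$. Then cover $K$ by finitely many $B_{2\sigma}(c_i)$, which gives (i).

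\emph{Item (ii).} $u_i$ is Lipschitz with constant $(1+\eta')L$ on $B(0,4\sigma)$. For $\delta<\sigma$, $u_i*\mu_\delta$ is smooth on $B(0,3\sigma)$ with Euclidean gradient bounded by the same constant. Transferring back via the bi-Lipschitz chart gives $|\nabla f_{i,\delta}|\le(1+\eta')^2L\le 2L$. Uniform convergence on $\overline U_i$ is standard for continuous functions. For a.e.\ convergence of gradients, use Rademacher: $u_i$ is differentiable a.e. We will show below that $u_i-\tfrac{A}{2}|z|^2$ is concave for some constant $A$. Mollification commutes with subtracting this quadratic, up to a constant, so the mollified functions are concave minus the same quadratic. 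Uniform convergence of concave functions then implies convergence of gradients at every differentiability point of the limit. We prove this elementary fact directly from the supporting-hyperplane inequality: pass to the limit in $v_\delta(y)\le v_\delta(z)+\langle\nabla v_\delta(z),y-z\rangle$ along subsequences and use uniqueness of the supergradient at differentiability points.

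\emph{Item (iii), the main step.} Show that in the chart, $u_i$ satisfies a Euclidean Hessian bound in the support sense:
\[
D^2u_i\le a'
\]
with $a'=a+\eta/2$. To see this, let $\varphi$ be a Riemannian upper support function at a point, and set $\psi=\varphi\circ\exp_{c_i}$. Then
\[
D^2\psi_{jk}=\Hess\varphi_{jk}+\Gamma^l_{jk}\partial_l\psi .
\]
The first term is at most $(a+\varepsilon)g\le(a+\varepsilon)(1+\eta')I$. The second term is at most $C\sigma|\nabla\psi|$. The delicate point is that $|\nabla\psi|$ must be bounded by about $L$. This holds because any $C^1$ upper support function of an $L$-Lipschitz $f$ at $x$ at which $f$ is differentiable has gradient equal to $\nabla f$. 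In general, a one-sided argument gives $\nabla\varphi(x)\in\partial^+f(x)$, whose elements all have norm $\le L$. So $D^2\psi\le a+\eta/2$ for $\eta'$ and $\sigma$ small.

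Next apply Lemma~\ref{lem:1d} along every line segment $t\mapsto u_i(z+t e)$ with $\beta\equiv a'$. This shows that $u_i-\tfrac{a'}2|z|^2$ is concave on the convex ball $B(0,4\sigma)$, which also supplies the constant $A$ used above. Mollification preserves this concavity on $B(0,3\sigma)$, so $D^2(u_i*\mu_\delta)\le a' I$ there.

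Finally convert back to the Riemannian Hessian:
\[
\Hess f_{i,\delta}=D^2(u_i*\mu_\delta)-\Gamma\,\partial(u_i*\mu_\delta)\le a'(1+\eta')g+C\sigma\cdot 2L\,g .
\]
With the choices above this is at most $(a+\eta)g$, so we may take $\omega_i\equiv0$, or allow $\omega_i(\delta)\to0$ for slack. The main obstacle is controlling the gradient of the support functions and the Christoffel error term uniformly; this is why $\sigma$ must be chosen small relative to $\eta/(L+1)$ at the start.
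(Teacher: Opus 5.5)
Your argument is correct, but it handles the key step (the metric in the chart) differently from the paper.

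\textbf{The Hessian step.} The paper keeps the variable comparison matrix $\mathsf B(y)=a\,g(y)+4\sqrt2\,\Lambda L\sigma I$. It applies Lemma~\ref{lem:1d} with the non-constant $\beta_\xi(t)=\xi^{\mathsf T}\mathsf B(z+t\xi)\xi$, integrates over $\xi^\perp$ by Fubini to get a distributional Hessian inequality, and then mollifies it. This produces $a\,(g*\mu_\delta)$, hence the error $\omega_i(\delta)=2|a|\sup\|g*\mu_\delta-g\|$; in exchange, its $\sigma$ depends only on $\Lambda L$ and $\eta$. You instead freeze the metric by taking $\sigma$ so small that $\|g-I\|\le\eta'$, with $\eta'$ depending on $|a|$. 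This turns the support bound into a constant Euclidean semiconcavity bound $D^2u_i\le a'$. Lemma~\ref{lem:1d} with constant $\beta$ then gives concavity of $u_i-\frac{a'}2|z|^2$ line by line, mollification preserves it exactly, and you get $\omega_i\equiv0$ with no Fubini step.

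\textbf{Gradient convergence and bounds.} For a.e.\ convergence of gradients, the paper uses Lebesgue points of $Du\in L^\infty$, which needs no structure on $u$. You use uniqueness of supergradients for the uniformly semiconcave approximants. That is the Rockafellar-type argument the paper only mentions in Remark~\ref{rem:GW}, and it gives convergence at every differentiability point. Your Lebesgue-number argument gives a genuine Lipschitz bound on each ball $B_{4\sigma}(c)$ and replaces the paper's a.e.\ bound $|Du|\le\sqrt2L$; both suffice.

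\textbf{Small corrections.}
\begin{enumerate}
\item The inequality $(a+\varepsilon)g\le(a+\varepsilon)(1+\eta')I$ reverses when $a+\varepsilon<0$, and the lemma allows any $a\in\R$. Write instead $(a+\varepsilon)g\le\bigl(a+\varepsilon+|a+\varepsilon|\eta'\bigr)I$. Similarly, when converting back use $a'I\le a'g+\frac{|a'|\eta'}{1-\eta'}\,g$. This is exactly why $\eta'$ must be small relative to $\eta/(1+|a|)$.
\item Unless $\mu$ is even, $|z|^2*\mu_\delta$ differs from $|z|^2$ by an affine function, not a constant. Concavity is unaffected.
\item Lemma~\ref{lem:1d} as stated only gives $q''\le a'$ in the sense of distributions. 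Concavity of $q-\frac{a'}2t^2$ needs the standard fact that a continuous function with nonpositive distributional second derivative is concave, or can be read off from the proof of Lemma~\ref{lem:1d}.
\end{enumerate}
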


The next proposition is the form of Lemma~\ref{lem:T} that will be used with the cutoff functions of Section~\ref{sec:proof}.

\begin{proposition}[cutoff inequality]\label{prop:cutoff}
Let $M$ be complete with $\sec_M\ge-K_0$, $x_0\in M$, $r:=\operatorname{dist}_M(x_0,\cdot)$, $0<R_*<R_1<R_2$ and $C_1:=\ct_{K_0}(R_*)$. Let $\Phi\in C^\infty(\R)$ satisfy $\Phi'\le0$, $\Phi\equiv\Phi_-$ on $(-\infty,R_1]$ for a constant $\Phi_-$, and $\Phi\equiv0$ on $[R_2,\infty)$, and let $\lambda\in C^1(M)$, $\lambda\ge0$, be constant along the leaves. Then $\varphi:=\Phi\circ r$ is Lipschitz with compact support, $\nabla\varphi=\Phi'(r)\nabla r$ a.e., and
\begin{equation}\label{eq:cutoff}
\int_M\lambda\langle\nabla\varphi,T\rangle-n\int_MH\lambda\,N(\varphi)\ \ge\ \int_M\lambda\Big[\Phi''(r)\,|\gl r|^2+nC_1\,\Phi'(r)\Big],
\end{equation}
where $\gl r:=\nabla r-N(r)N$ is defined a.e., and the integrand on the right vanishes outside $S:=\{R_1\le r\le R_2\}$.
\end{proposition}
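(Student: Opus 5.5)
The plan is to apply Lemma~\ref{lem:T} to smooth approximations of $\varphi$ built from the regularizations of $r$ in Lemma~\ref{lem:reg}, and then pass to the limit. The preliminary claims are routine. $r$ is $1$-Lipschitz and $\Phi$ is smooth with $\Phi$ constant outside $[R_1,R_2]$, so $\varphi$ is Lipschitz and $\varphi=0$ on $\{r\ge R_2\}$, which is compact by Hopf--Rinow. The chain rule for Lipschitz functions gives $\nabla\varphi=\Phi'(r)\nabla r$ a.e. Since $\Phi'$ and $\Phi''$ vanish outside $[R_1,R_2]$, the integrand on the right of \eqref{eq:cutoff} vanishes off $S$. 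I rewrite the left side using \eqref{eq:T}: formally it equals $\int_M\lambda\,\tr_{TL}\Hess\varphi$. On the smooth part,
\[
\tr_{TL}\Hess\varphi=\Phi''(r)|\gl r|^2+\Phi'(r)\tr_{TL}\Hess r .
\]
Moreover $\Phi'\le0$, and $\tr_{TL}\Hess r\le nC_1$ on $S\subset\{r>R_*\}$ by Lemma~\ref{lem:calabi}. This gives the pointwise inequality behind the desired estimate. The remaining work is to justify it despite $r$ being non-smooth.

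For the regularization I fix an open $W$ with $S\subset W\subset\{r>R_*\}$, take $K=S$, pick $\eta>0$, and apply Lemma~\ref{lem:reg} to $f=r$ with $a=C_1$ and $L=1$. This gives balls $U_i$ covering $S$. I add the open set $U_0:=\{r<R_1\}\cup\{r>R_2\}$, on which $\varphi$ is locally constant. I then take a smooth partition of unity $\{\psi_i\}_{i=0}^m$ subordinate to $\{U_0,U_1,\dots,U_m\}$ on a neighbourhood of $\supp\varphi$, and define
\[
\varphi_\delta:=\psi_0\varphi+\sum_{i\ge1}\psi_i\,\Phi(f_{i,\delta}).
\]
This function is $C^\infty_c$. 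Applying Lemma~\ref{lem:T} with $\kappa=\lambda$ gives $\text{LHS}(\varphi_\delta)=\int\lambda\,\tr_{TL}\Hess\varphi_\delta$.

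The key step is the expansion of the Hessian. One has
\[
\Hess\varphi_\delta=\sum_i\psi_i\bigl[\Phi''(f_{i,\delta})\,df_{i,\delta}^2+\Phi'(f_{i,\delta})\Hess f_{i,\delta}\bigr]+E_\delta ,
\]
where $E_\delta$ collects the terms involving $\nabla\psi_i$ and $\Hess\psi_i$. The partition identities $\sum\psi_i=1$, $\sum\nabla\psi_i=0$ and $\sum\Hess\psi_i=0$ let me write
\[
E_\delta=\sum_i 2\,d\psi_i\odot\bigl(d\Phi(f_{i,\delta})-d\varphi\bigr)+\sum_i\Hess\psi_i\,\bigl(\Phi(f_{i,\delta})-\varphi\bigr).
\]
The second sum tends to $0$ uniformly because $f_{i,\delta}\to r$ uniformly. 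The first sum tends to $0$ in $L^1$ by dominated convergence, since $\nabla f_{i,\delta}\to\nabla r$ a.e. and $|\nabla f_{i,\delta}|\le2$. For the main term, $\Phi'(f_{i,\delta})\le0$ and (iii) give the one-sided bound
\[
\Phi'(f_{i,\delta})\tr_{TL}\Hess f_{i,\delta}\ge n\bigl(C_1+\eta+\omega_i(\delta)\bigr)\Phi'(f_{i,\delta}).
\]
The term $\Phi''(f_{i,\delta})\,|\gl f_{i,\delta}|^2$ converges a.e. and boundedly to $\Phi''(r)|\gl r|^2$. Since $\lambda\ge0$, Fatou's lemma is not even needed: dominated convergence on the compact set $\supp\varphi$ suffices.

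Finally I take limits. Letting $\delta\to0$ gives
\[
\liminf_{\delta\to0}\int\lambda\,\tr_{TL}\Hess\varphi_\delta\ \ge\ \int\lambda\bigl[\Phi''(r)|\gl r|^2+n(C_1+\eta)\Phi'(r)\bigr].
\]
On the left side of \eqref{eq:cutoff}, written with $\varphi_\delta$, only first derivatives of $\varphi_\delta$ appear, and $\nabla\varphi_\delta\to\nabla\varphi$ a.e. and boundedly. Hence it converges to the left side with $\varphi$, using that $T$, $H$ and $\lambda$ are continuous on a compact set. Letting $\eta\to0$ then finishes the proof. I expect the main obstacle to be the bookkeeping with the partition of unity, specifically ensuring that the cross terms $E_\delta$ vanish in the limit; the identity $\sum_i\nabla\psi_i=0$ handles this. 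A secondary subtlety is that $\tr_{TL}$ of the upper Hessian bound only yields an inequality, and its direction is correct only because $\Phi'\le0$ and $\lambda\ge0$.
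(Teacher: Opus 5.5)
Your proposal is correct and follows essentially the same route as the paper: regularize $r$ with Lemma~\ref{lem:reg} on $S\subset\{r>R_*\}$, and glue with a partition of unity adapted to $\{r<R_1\}$, $\{r>R_2\}$ and the balls $U_i$. You then apply Lemma~\ref{lem:T} to the smooth $\varphi_\delta$, use $\Phi'\le0$ together with the one-sided bound on $\Hess f_{i,\delta}$, remove the cross terms via $\sum_i d\psi_i=0$ and $\sum_i\Hess\psi_i=0$, and pass to the limit by dominated convergence before letting $\eta\to0$. The differences are cosmetic: you merge $\{r<R_1\}$ and $\{r>R_2\}$ into a single $U_0$ and rewrite $E_\delta$ so that it visibly tends to $0$, whereas the paper computes the a.e. limit $E_0$ and checks that $E_0=0$; you should also take the partition of unity on all of $M$ (the cover allows this) and dominate on a fixed compact set containing every $\supp\psi_i$, $i\ge1$, rather than on $\supp\varphi$.
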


\begin{proof}
By the Hopf-Rinow theorem, the closed metric balls of $M$ are
compact. Since the distance function
$r=\operatorname{dist}_M(x_0,\cdot)$ is $1$-Lipschitz, the function
$\varphi=\Phi\circ r$ is Lipschitz. Moreover, $\varphi=0$ on
$\{r\ge R_2\}$, and hence $\varphi$ has compact support. The chain
rule for Lipschitz functions gives
\[
\nabla\varphi=\Phi'(r)\nabla r
\quad\text{almost everywhere on }M.
\]
In particular, $\nabla\varphi=0$ almost everywhere outside $S:=\{x\in M:R_1\le r(x)\le R_2\}.$ The set $S$ is compact and is contained in the open set
$W:=\{r>R_*\}$. For every compactly supported Lipschitz function $\psi$, set
\[
\mathcal Q(\psi):=
\int_M\lambda\langle\nabla\psi,T\rangle
-n\int_M H\lambda\,N(\psi).
\]
We first establish the desired inequality by approximation.

By Lemma~\ref{lem:calabi}, the distance function satisfies
\[
\operatorname{Hess}r\le C_1g
\]
in the support sense on $W$, where
$C_1=\ct_{K_0}(R_*)$. Fix $\eta>0$. Applying
Lemma~\ref{lem:reg} to $f=r$, $L=1$, $a=C_1$, and the compact set
$S$, we obtain finitely many open sets
$U_i=B_{2\sigma}(c_i)$ covering $S$, with
$\overline{U_i}\subset W$, and smooth functions
$r_{i,\delta}$ on $B_{3\sigma}(c_i)$, for all sufficiently small
$\delta>0$, such that
\[
r_{i,\delta}\longrightarrow r
\quad\text{uniformly on }\overline{U_i},
\]
\[
\nabla r_{i,\delta}\longrightarrow\nabla r
\quad\text{almost everywhere on }U_i,
\]
and
\[
\operatorname{Hess}r_{i,\delta}
\le \bigl(C_1+\eta+\omega_i(\delta)\bigr)g
\quad\text{on }\overline{U_i},
\qquad
\omega_i(\delta)\longrightarrow0.
\]
Here and below, $\delta$ is taken sufficiently small that all the
regularizations are defined on the indicated sets.

Let $O_-:=\{r<R_1\}$, $O_+:=\{r>R_2\}$. Choose a smooth partition of unity
\[
\zeta_-+\zeta_++\sum_{i=1}^m\zeta_i=1
\]
subordinate to the cover
$\{O_-,O_+,U_1,\ldots,U_m\}$, with
$\operatorname{supp}\zeta_-$ and
$\operatorname{supp}\zeta_i$ compact. Define
\[
\varphi_\delta
:=\zeta_-\Phi_-+
\sum_{i=1}^m\zeta_i\,\Phi(r_{i,\delta}).
\]
Each product $\zeta_i\Phi(r_{i,\delta})$ is understood on $U_i$ and
extended by zero outside $U_i$; this extension is smooth because
$\operatorname{supp}\zeta_i\Subset U_i$. Thus
$\varphi_\delta\in C_c^\infty(M)$, and all the functions
$\varphi_\delta$ have support in a fixed compact subset of $M$.

Since $\Phi(r)=\Phi_-$ on $O_-$ and $\Phi(r)=0$ on $O_+$, the
partition of unity gives
\[
\varphi=\zeta_-\Phi_-+
\sum_{i=1}^m\zeta_i\,\Phi(r).
\]
The uniform convergence $r_{i,\delta}\to r$, together with the
boundedness of $\Phi'$, implies
$\varphi_\delta\to\varphi$ uniformly. Moreover, the chain rule gives
\[
\nabla\bigl(\Phi(r_{i,\delta})\bigr)
=\Phi'(r_{i,\delta})\nabla r_{i,\delta}.
\]
Consequently, the almost-everywhere convergence of
$\nabla r_{i,\delta}$ and the uniform convergence of
$r_{i,\delta}$ imply
\[
\nabla\varphi_\delta\longrightarrow\nabla\varphi
\quad\text{almost everywhere on }M.
\]
The gradients $\nabla\varphi_\delta$ are uniformly bounded, and
their supports lie in a fixed compact set. Since $\lambda$, $T$,
and $H$ are bounded on that compact set, the dominated convergence
theorem yields
\begin{equation}\label{eq:Q-convergence-cutoff}
\mathcal Q(\varphi_\delta)\longrightarrow\mathcal Q(\varphi).
\end{equation}

By Lemma~\ref{lem:T}, applied to the smooth compactly supported
function $\varphi_\delta$, we have
\[
\mathcal Q(\varphi_\delta)
=\int_M\lambda\,\operatorname{tr}_{TL}
\operatorname{Hess}\varphi_\delta.
\]
The product rule for the Hessian gives
\[
\operatorname{Hess}\varphi_\delta
=\sum_{i=1}^m\zeta_i
\operatorname{Hess}\bigl(\Phi(r_{i,\delta})\bigr)
+E_\delta,
\]
where
\[
\begin{aligned}
E_\delta
={}&\Phi_-\operatorname{Hess}\zeta_-
+\sum_{i=1}^m\Bigl[
d\zeta_i\otimes d\bigl(\Phi(r_{i,\delta})\bigr)
+d\bigl(\Phi(r_{i,\delta})\bigr)\otimes d\zeta_i
+\Phi(r_{i,\delta})\operatorname{Hess}\zeta_i
\Bigr].
\end{aligned}
\]
By the convergence already established, $E_\delta$ converges almost everywhere to
\[
\begin{aligned}
E_0
={}&\Phi_-\operatorname{Hess}\zeta_-
+\sum_{i=1}^m\Bigl[
d\zeta_i\otimes d\varphi
+d\varphi\otimes d\zeta_i
+\varphi\operatorname{Hess}\zeta_i
\Bigr].
\end{aligned}
\]
Furthermore, $E_\delta$ is uniformly bounded and supported in a
fixed compact set.

We claim that $E_0=0$ almost everywhere. Indeed,
\[
\sum_{i=1}^m\zeta_i=1-\zeta_--\zeta_+,
\]
so
\[
\sum_{i=1}^m d\zeta_i=-d\zeta_--d\zeta_+,
\qquad
\sum_{i=1}^m\operatorname{Hess}\zeta_i
=-\operatorname{Hess}\zeta_-
-\operatorname{Hess}\zeta_+.
\]
Since $\operatorname{supp}\zeta_-\subset O_-$ and
$\operatorname{supp}\zeta_+\subset O_+$, we have
\[
d\varphi=0\quad\text{almost everywhere on }
\operatorname{supp}d\zeta_-\cup\operatorname{supp}d\zeta_+,
\]
and $\varphi=\Phi_-$ on $\operatorname{supp}\zeta_-$, $\varphi=0$ on $\operatorname{supp}\zeta_+$. It follows that the terms involving $d\varphi$ vanish almost
everywhere, while
\[
\begin{aligned}
E_0
=\Phi_-\operatorname{Hess}\zeta_-
-\varphi\operatorname{Hess}\zeta_-
-\varphi\operatorname{Hess}\zeta_+
=0
\quad\text{almost everywhere}.
\end{aligned}
\]
The dominated convergence theorem therefore gives
\begin{equation}\label{eq:error-convergence-cutoff}
\int_M\lambda\,\operatorname{tr}_{TL}E_\delta
\longrightarrow0.
\end{equation}

It remains to estimate the main Hessian terms. For each $i$, the
chain rule for the Hessian yields
\[
\begin{aligned}
\operatorname{Hess}\bigl(\Phi(r_{i,\delta})\bigr)
={}&\Phi''(r_{i,\delta})\,
dr_{i,\delta}\otimes dr_{i,\delta}+\Phi'(r_{i,\delta})\operatorname{Hess}r_{i,\delta}.
\end{aligned}
\]
Taking the trace over the tangent space $TL$ of a leaf, whose
dimension is $n$, gives
\[
\begin{aligned}
\operatorname{tr}_{TL}
\operatorname{Hess}\bigl(\Phi(r_{i,\delta})\bigr)
={}&\Phi''(r_{i,\delta})
\bigl|\nabla_Lr_{i,\delta}\bigr|^2+\Phi'(r_{i,\delta})
\operatorname{tr}_{TL}\operatorname{Hess}r_{i,\delta},
\end{aligned}
\]
where
$
\nabla_Lr_{i,\delta}
:=\nabla r_{i,\delta}
-N(r_{i,\delta})N.
$
By Lemma~\ref{lem:reg}(iii),
\[
\operatorname{tr}_{TL}\operatorname{Hess}r_{i,\delta}
\le n\bigl(C_1+\eta+\omega_i(\delta)\bigr).
\]
Because $\Phi'\le0$, multiplying this inequality by
$\Phi'(r_{i,\delta})$ reverses its direction. Hence
\[
\begin{aligned}
\operatorname{tr}_{TL}
\operatorname{Hess}\bigl(\Phi(r_{i,\delta})\bigr)
\ge{}&
\Phi''(r_{i,\delta})
\bigl|\nabla_Lr_{i,\delta}\bigr|^2+n\Phi'(r_{i,\delta})
\bigl(C_1+\eta+\omega_i(\delta)\bigr).
\end{aligned}
\]
Multiplying by $\lambda\zeta_i\ge0$, integrating, and summing
over $i$, we obtain
\[
\begin{aligned}
\mathcal Q(\varphi_\delta)
\ge{}&
\sum_{i=1}^m\int_M
\lambda\zeta_i\,
\Phi''(r_{i,\delta})
\bigl|\nabla_Lr_{i,\delta}\bigr|^2\\
&+n\sum_{i=1}^m\int_M
\lambda\zeta_i\,\Phi'(r_{i,\delta})
\bigl(C_1+\eta+\omega_i(\delta)\bigr)\\
&+\int_M\lambda\,\operatorname{tr}_{TL}E_\delta.
\end{aligned}
\]

For almost every point of $U_i$, the convergence of the gradients
implies
\[
\nabla_Lr_{i,\delta}\longrightarrow
\nabla_Lr:=\nabla r-N(r)N.
\]
All terms in the preceding integrals are dominated by integrable
functions independent of sufficiently small $\delta$: this follows
from the uniform gradient bounds in Lemma~\ref{lem:reg}(ii), the
boundedness of $\Phi'$ and $\Phi''$, the convergence
$\omega_i(\delta)\to0$, and the compactness of the supports.
Therefore, by dominated convergence, \eqref{eq:Q-convergence-cutoff},
and \eqref{eq:error-convergence-cutoff}, we obtain
\[
\begin{aligned}
\mathcal Q(\varphi)\ge
\sum_{i=1}^m\int_M\lambda\zeta_i
\Bigl[
\Phi''(r)|\nabla_Lr|^2+n\Phi'(r)(C_1+\eta)
\Bigr].
\end{aligned}
\]
Since $\Phi'$ and $\Phi''$ vanish outside $S$, and
$\zeta_-=\zeta_+=0$ on $S$, we have
$\sum_{i=1}^m\zeta_i=1$ on $S$. Recalling that
$\nabla^Lr=\nabla r-N(r)N=\gl r$ almost everywhere, it follows
that
\[
\mathcal Q(\varphi)\ge
\int_M\lambda\Bigl[
\Phi''(r)|\gl r|^2
+n\Phi'(r)(C_1+\eta)
\Bigr].
\]
Finally, letting $\eta\to0^+$ and using dominated convergence
on the compact set $S$, we conclude that
\[
\mathcal Q(\varphi)\ge
\int_M\lambda\Bigl[
\Phi''(r)|\gl r|^2+nC_1\Phi'(r)
\Bigr],
\]
which is precisely \eqref{eq:cutoff}.
\end{proof}

\begin{remark}
If $r$ were $C^2$ on a neighbourhood of $S$, Proposition~\ref{prop:cutoff} would follow directly from Lemma~\ref{lem:T}, since then $\varphi\in C^2_c(M)$ and $\tr_{TL}\Hess\varphi=\Phi''(r)|\gl r|^2+\Phi'(r)\tr_{TL}\Hess r\ge\Phi''(r)|\gl r|^2+nC_1\Phi'(r)$. The approximation argument is needed because $r$ is not differentiable on the cut locus of $x_0$. Alternatively one could smooth $r$ by the Riemannian convolution of Greene and Wu \cite{GW79}.
\end{remark}

\subsection{The flow field}\label{subsec:flow}

Fix a constant $c_0>\sqrt{K_0}$ and an integer $s\ge3$ such
that $2s>n$. Set $\delta_0^2:=c_0^2-K_0>0.$ For $h\in\mathbb R$, define
\[
\phi(h):=
\begin{cases}
0, & h\le c_0,\\
h^2-c_0^2, & h>c_0.
\end{cases}
\]
Since $c_0>0$, this is equivalent to
$\phi(h)=((h_+)^2-c_0^2)_+$, where
$h_+:=\max\{h,0\}$. In particular, $\phi$ is nonnegative and locally Lipschitz, and it is smooth on $\mathbb{R}\setminus\{c_0\}$.

Define
\begin{equation*}\label{eq:G}
G:=\phi(H)=\bigl((H_+)^2-c_0^2\bigr)_+,
\qquad
F:=HG^s,
\qquad
Y:=(2s-n)HG^s+2s\delta_0^2HG^{s-1}.
\end{equation*}
By the definition of $\phi$, we have
\[
G(x)>0
\quad\Longleftrightarrow\quad
H(x)>c_0.
\]
Thus $G>0$ precisely on the open set
$
U:=\{x\in M:H(x)>c_0\},
$
and $G=0$ on $M\setminus U$. We next record the regularity of these functions. Since
\[
\phi(h)^s=(h-c_0)_+^s(h+c_0)^s,
\]
Leibniz's rule shows that, for every \(k=0,\ldots,s-1\), the \(k\)-th derivative on \((c_0,\infty)\) tends to zero as \(h\to c_0^+\), because each term contains a factor \((h-c_0)^{s-j}\) with \(j\le k<s\). Since \(\phi^s\equiv0\) on \((-\infty,c_0]\), these derivatives match continuously at \(h=c_0\). Hence
$
\phi^s\in C^{s-1}(\mathbb R).
$
Similarly, $\phi^{s-1}$ vanishes to order $s-1$ at $c_0$,
and hence
$
\phi^{s-1}\in C^{s-2}(\mathbb R).
$
Since $s\ge3$, both $\phi^s$ and $\phi^{s-1}$ are of class
$C^1$. Assuming, as in the present setting, that $H\in C^1(M)$,
the chain rule therefore implies that
\[
G^s=\phi(H)^s\in C^1(M),
\qquad
G^{s-1}=\phi(H)^{s-1}\in C^1(M).
\]
It follows that $F,Y\in C^1(M)$.

Furthermore, $F$ and $Y$ vanish identically on $M\setminus U$.
On $U$, we have $H>c_0>0$, $G>0$, and $\delta_0^2>0$.
Since $2s>n$, both coefficients $2s-n$ and $2s$ are positive.
Consequently,
\[
F\ge0,\qquad Y\ge0
\quad\text{on }M.
\]

Finally, define
\begin{equation}\label{eq:V}
V:=\kappa N+\lambda T,
\qquad
\kappa:=G^s,
\qquad
\lambda:=\frac{2s}{n}HG^{s-1}.
\end{equation}
The regularity established above implies that
$\kappa,\lambda\in C^1(M)$. Since $H$ is constant along each
leaf, and $\kappa$ and $\lambda$ are functions of $H$, both
$\kappa$ and $\lambda$ are constant along the leaves.
Moreover, $\lambda=0$ on $M\setminus U$, whereas on $U$ we
have $H>c_0>0$ and $G>0$. Therefore,
$
\lambda\ge0\quad\text{on }M.
$
In particular, provided $N$ and $T$ are of class $C^1$, the
vector field $V$ is of class $C^1$.

\begin{lemma}\label{lem:divV}
If $\Ric(N,N)\ge-nK_0$, then $\divg V\ge Y\ge0$ on $M$.
\end{lemma}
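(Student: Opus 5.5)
The plan is a direct pointwise computation of $\divg V$, done separately on the open set $U=\{H>c_0\}$ and on its complement, with continuity handling the boundary. Throughout I assume, as the paper does, that $N$, $T$ and $H$ are regular enough for $V\in C^1$.

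\textbf{Splitting the divergence.} I expand
\[
\divg V=N(\kappa)+\kappa\,\divg N+\lambda\,\divg T+\langle\nabla\lambda,T\rangle .
\]
By Lemma~\ref{lem:gradH}, $\nabla\lambda=N(\lambda)N$. Since $T=\nabla_NN\perp N$, the last term vanishes. Next I use \eqref{eq:H}, $\divg N=-nH$, and the formula $\divg T=|A|^2+\Ric(N,N)-n\mathfrak g$ from Lemma~\ref{lem:divT}.

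\textbf{The computation on $U$.} On $U$ the function $G=H^2-c_0^2$ is smooth in $H$, so
\[
N(\kappa)=sG^{s-1}\,N(G)=2sHG^{s-1}\mathfrak g .
\]
Hence
\[
\divg V=2sHG^{s-1}\mathfrak g-nHG^s+\tfrac{2s}{n}HG^{s-1}\big(|A|^2+\Ric(N,N)\big)-2sHG^{s-1}\mathfrak g .
\]
The uncontrolled $\mathfrak g$-terms cancel exactly; this cancellation is the reason for the choice of the coefficient $\tfrac{2s}{n}$ in $\lambda$. On $U$ the factor $\tfrac{2s}{n}HG^{s-1}$ is positive. So I may use \eqref{eq:CS}, $|A|^2\ge nH^2$, together with the hypothesis $\Ric(N,N)\ge -nK_0$. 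This gives
\[
\divg V\ \ge\ -nHG^s+2sHG^{s-1}(H^2-K_0).
\]
Now $H^2-K_0=G+c_0^2-K_0=G+\delta_0^2$, so the right-hand side is
\[
(2s-n)HG^s+2s\delta_0^2HG^{s-1}=Y .
\]

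\textbf{The complement of $U$.} On the interior of $M\setminus U$ we have $\kappa=\lambda=0$, so $V\equiv0$ there and $\divg V=0=Y$.

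On $\partial U$ I use that $V$ is $C^1$ and that $Y$ is continuous. The inequality $\divg V\ge Y$ holds on the open set $U$ and on the interior of the complement. The union of these two sets is dense, so the inequality passes to $\partial U$ by continuity.

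Alternatively, at a point of $\partial U$ all of $\kappa$, $\lambda$, $N(\kappa)$ and $N(\lambda)$ vanish, because $\phi^s$ and $\phi^{s-1}$ are $C^1$ with vanishing derivative at $c_0$. The same expansion then gives $\divg V=0=Y$ there directly.

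Finally, $Y\ge0$ was already shown in \S\ref{subsec:flow}. The only delicate point I expect is this boundary regularity; the rest is bookkeeping.
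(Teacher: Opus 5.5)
Your proposal is correct and follows essentially the same route as the paper. You expand $\divg V$ by the product rule, kill $\langle\nabla\lambda,T\rangle$ via Lemma~\ref{lem:gradH}, cancel the $\mathfrak g$-terms because $\kappa_H=n\lambda$, and conclude on $U$ using \eqref{eq:CS} and the Ricci bound. The only cosmetic difference is at $\partial U$: the paper records $\kappa_H=n\lambda$ on all of $M$ by continuity, so the identity $\divg V=\lambda(|A|^2+\Ric(N,N))-nH\kappa$ holds globally and the boundary case is immediate, whereas you treat $\partial U$ separately by density or by direct vanishing, and both of those arguments are valid.
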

\begin{proof}
Recall that
\[
\kappa=G^s,
\qquad
\lambda=\frac{2s}{n}HG^{s-1},
\qquad
G=\bigl((H_+)^2-c_0^2\bigr)_+,
\]
where $c_0>0$, $s\ge3$, and $2s>n$. By the regularity
established above, $\kappa,\lambda\in C^1(M)$. Moreover,
differentiating $\kappa$ with respect to $H$ gives
\[
\kappa_H:=\frac{d\kappa}{dH}
=sG^{s-1}\frac{dG}{dH}
=2sHG^{s-1}
=n\lambda
\]
on $\{H>c_0\}$. On the complement of this set, both sides vanish. By continuity, we therefore have
$\kappa_H=n\lambda$ on $M$.

By Lemma~\ref{lem:gradH}, the gradients of $\kappa$ and
$\lambda$ satisfy
\[
\nabla\kappa=\kappa_H\,\mathfrak{g}N,
\qquad
\nabla\lambda=\lambda_H\,\mathfrak{g}N,
\]
where $\mathfrak{g}=N(H)$ denotes the scalar appearing in
Lemma~\ref{lem:gradH}. In particular,
$
\langle\nabla\lambda,T\rangle=0,
$
because $T$ is tangent to the leaves and hence orthogonal to
$N$. Using the product rule for divergence, we obtain
\[
\begin{aligned}
\operatorname{div}V
&=\operatorname{div}(\kappa N)
  +\operatorname{div}(\lambda T)\\
&=\langle\nabla\kappa,N\rangle
  +\kappa\operatorname{div}N
  +\langle\nabla\lambda,T\rangle
  +\lambda\operatorname{div}T.
\end{aligned}
\]
By the identities in \eqref{eq:H} and Lemma~\ref{lem:divT},
\[
\operatorname{div}N=-nH,
\qquad
\operatorname{div}T
=|A|^2+\operatorname{Ric}(N,N)-n\mathfrak{g}.
\]
Consequently,
\[
\begin{aligned}
\operatorname{div}V
&=\kappa_H \mathfrak{g}-nH\kappa
  +\lambda\bigl(|A|^2+\operatorname{Ric}(N,N)-n\mathfrak{g}\bigr)\\
&=(\kappa_H-n\lambda)\mathfrak{g}
  +\lambda\bigl(|A|^2+\operatorname{Ric}(N,N)\bigr)
  -nH\kappa.
\end{aligned}
\]
Since $\kappa_H=n\lambda$ on $M$, the terms involving $\mathfrak{g}$
cancel, and we conclude that
\begin{equation}\label{eq:divV-identity}
\operatorname{div}V
=\lambda\bigl(|A|^2+\operatorname{Ric}(N,N)\bigr)
-nH\kappa.
\end{equation}

We now prove the desired inequality separately on $M\setminus U$
and on $U$, where $U=\{H>c_0\}$.

On $M\setminus U$, we have $G=0$, and therefore
$\kappa=\lambda=0$. Since $\kappa$ is $C^1$ and vanishes on
this set, its derivative also vanishes at every interior point
of $M\setminus U$. At points of $\partial U$, the identity
$\kappa_H=n\lambda$ and continuity give the same conclusion.
It follows from \eqref{eq:divV-identity} that
$
\operatorname{div}V=0
\quad\text{on }M\setminus U.
$
By the definition of $Y$, we likewise have
$
Y=0
\quad\text{on }M\setminus U.
$
Hence $\operatorname{div}V=Y=0$ there. On $U$, we have $H>c_0>0$, $G>0$, and
$
\lambda=\frac{2s}{n}HG^{s-1}>0.
$
The Cauchy-Schwarz inequality for the principal curvatures
implies
$
|A|^2\ge nH^2
$. Combining this estimate with the assumed lower
bound $\operatorname{Ric}(N,N)\ge-nK_0$, we obtain
\[
|A|^2+\operatorname{Ric}(N,N)
\ge n(H^2-K_0).
\]
Since $\lambda\ge0$, identity \eqref{eq:divV-identity} yields
\[
\begin{aligned}
\operatorname{div}V
&\ge n\lambda(H^2-K_0)-nH\kappa\\
&=2sHG^{s-1}(H^2-K_0)-nHG^s.
\end{aligned}
\]
On $U$, the definition of $G$ gives
$
G=H^2-c_0^2.
$
Recalling that $\delta_0^2=c_0^2-K_0$, we have
\[
H^2-K_0=G+\delta_0^2.
\]
Substituting this identity into the preceding estimate, we find
\[
\begin{aligned}
\operatorname{div}V
&\ge 2sHG^{s-1}(G+\delta_0^2)-nHG^s\\
&=(2s-n)HG^s+2s\delta_0^2HG^{s-1}\\
&=Y.
\end{aligned}
\]
Finally, since $2s>n$, $\delta_0^2>0$, and $H,G>0$ on $U$,
the definition of $Y$ implies $Y\ge0$ on $U$. Together with
$Y=0$ on $M\setminus U$, this proves that
$\operatorname{div}V\ge Y\ge0$
on $M$.
\end{proof}

\section{Proof of Theorem A}\label{sec:proof}

Throughout this section, $M^{n+1}$ is complete with $\sec_M\ge-K_0$, $\mathfrak{F}$ is a codimension one CMC foliation of $M$, $x_0\in M$ is fixed and $r:=\operatorname{dist}_M(x_0,\cdot)$. If $v\in T_xM$ is a unit vector and $\{v,e_1,\dots,e_n\}$ is an orthonormal basis, then $\Ric(v,v)=\sum_i\sec(v,e_i)\ge-nK_0$; thus
\begin{equation}\label{eq:ric}
\Ric_M\ge-nK_0 ,
\end{equation}
which is what Lemma~\ref{lem:divV} and the volume comparison below require. We fix $R_*>0$, put $C_1:=\ct_{K_0}(R_*)$ and $R_\flat:=R_*+1$, and let $c_0,s,G,\delta_0,F,Y,\kappa,\lambda,V$ be as in Subsection \ref{subsec:flow}.

\subsection{Cutoff functions and volume}
Let $\psi\in C^\infty(\mathbb R)$ be a nonincreasing function such
that
$\psi(t)=1$ for $t\le0$, and $\psi(t)=0$ for $t\ge1$.
Set
\[
C_\psi:=\sup_{t\in\mathbb R}|\psi'(t)|,
\qquad
C'_\psi:=\sup_{t\in\mathbb R}|\psi''(t)|.
\]
Since $\psi(0)=1$ and $\psi(1)=0$, the mean value theorem implies that there exists $\xi\in(0,1)$ such that
$
|\psi'(\xi)|=1.
$
Consequently, $C_\psi\ge1$. Moreover, for every $\tau>0$,
one can choose such a smooth nonincreasing cutoff function $\psi$
with $C_\psi\le1+\tau$.

For $\varepsilon\in(0,1]$ and $R\ge R_\flat$, define
\[
\chi(x):=\psi\left(\frac{r(x)-R}{\varepsilon R}\right),
\qquad
A_R:=\{x\in M:R\le r(x)\le(1+\varepsilon)R\}.
\]
Since $r$ is $1$-Lipschitz and $\psi$ is smooth with bounded
derivative, $\chi$ is Lipschitz. The monotonicity and prescribed
values of $\psi$ imply
\[
0\le\chi\le1,\qquad
\chi=1\ \text{on }\{r\le R\},\qquad
\chi=0\ \text{on }\{r\ge(1+\varepsilon)R\}.
\]
In particular,
$
\operatorname{supp}\chi
\subset\{r\le(1+\varepsilon)R\}.
$
The chain rule for Lipschitz functions yields, almost everywhere,
\[
\nabla\chi
=\frac{1}{\varepsilon R}
\psi'\left(\frac{r-R}{\varepsilon R}\right)\nabla r.
\]
Since $|\nabla r|\le1$ almost everywhere, we obtain
\[
|\nabla\chi|\le\frac{C_\psi}{\varepsilon R}
\quad\text{almost everywhere on }M.
\]
Furthermore, $\psi'=0$ on $(-\infty,0)\cup(1,\infty)$,
so $\nabla\chi=0$ almost everywhere outside $A_R$.

Define
\begin{equation}\label{eq:CR}
C'_R:=nC_1+\frac{C'_\psi}{C_\psi\,\varepsilon R}.
\end{equation}
Since $C_\psi\ge1$ and $C'_\psi\ge0$ are independent of $R$,
the function $R\mapsto C'_R$ is nonincreasing and
$
\lim_{R\to\infty}C'_R=nC_1.
$

We next establish a volume bound. By the Ricci curvature
lower bound in \eqref{eq:ric}, the Bishop-Gromov volume
comparison theorem (see \cite{Pet}) gives
\[
\operatorname{vol}B_t(x_0)
\le |\mathbb S^n|
\int_0^t
\left(
\frac{\sinh(\sqrt{K_0}\,s)}{\sqrt{K_0}}
\right)^n ds,
\]
where the integrand is interpreted as $s^n$ when $K_0=0$.
Using $\sinh x\le xe^x$ for $x\ge0$, we find
\[
\left(
\frac{\sinh(\sqrt{K_0}\,s)}{\sqrt{K_0}}
\right)^n
\le s^n e^{n\sqrt{K_0}\,s}.
\]
Hence
\[
\begin{aligned}
\operatorname{vol}B_t(x_0)
&\le |\mathbb S^n|\int_0^t
s^n e^{n\sqrt{K_0}\,s}\,ds\\
&\le \frac{|\mathbb S^n|}{n+1}
t^{n+1}e^{n\sqrt{K_0}\,t}.
\end{aligned}
\]
Writing $\omega_n:=\frac{|\mathbb S^n|}{n+1}$, we conclude that
\[
\operatorname{vol}B_t(x_0)
\le \omega_n t^{n+1}e^{n\sqrt{K_0}\,t}.
\]

Since $\{r\le t\}\subset B_{t'}(x_0)$ for every $t'>t$,
letting $t'\downarrow t$ in the preceding estimate gives
\begin{equation}\label{eq:BG}
\operatorname{vol}\{r\le(1+\varepsilon)R\}
\le\mathcal V(R),
\end{equation}
where
$\mathcal V(R):=
\omega_n\bigl((1+\varepsilon)R\bigr)^{n+1}
e^{n\sqrt{K_0}(1+\varepsilon)R}.
$

Finally, define
\[
I_j:=\int_M\chi^jY\quad(j\ge1),\qquad
I_0:=\int_{\{r\le(1+\varepsilon)R\}}Y,
\qquad
J(t):=\int_{\{r\le t\}}Y.
\]
Because $Y$ is continuous and the closed metric balls are
compact by the Hopf-Rinow theorem, these integrals are finite.
Moreover, $Y\ge0$ by Lemma~\ref{lem:divV}, and $0\le\chi\le1$.
Thus, for every integer $j\ge1$,
\[
0\le\chi^j\le\chi^{j-1}\le1,
\]
where for $j=1$ we interpret $\chi^0=1$ on
$\{r\le(1+\varepsilon)R\}$ in the integral defining $I_0$.
Since $\chi=1$ on $\{r\le R\}$ and
$\operatorname{supp}\chi\subset\{r\le(1+\varepsilon)R\}$,
we obtain
\[
\begin{aligned}
J(R)
&=\int_{\{r\le R\}}Y\\
&\le \int_M\chi^jY\\
&=I_j\\
&\le I_{j-1}\\
&\le
\int_{\{r\le(1+\varepsilon)R\}}Y\\
&=I_0\\
&=J\bigl((1+\varepsilon)R\bigr).
\end{aligned}
\]
Therefore,
\begin{equation}\label{eq:Imono}
J(R)\le I_j\le I_{j-1}\le I_0
=J\bigl((1+\varepsilon)R\bigr),
\qquad j\ge1.
\end{equation}

\subsection{The flow inequality and its iteration}

\begin{proposition}[flow inequality]\label{prop:flux}
For $R\ge R_\flat$ and every integer $j\ge1$,
\begin{equation}\label{eq:flux}
I_j\ \le\ \frac{jC_\psi}{\varepsilon R}\int_{A_R}\chi^{j-1}\,\cB_R,\qquad \cB_R:=G^{s}+2sH^2G^{s-1}+\frac{2s}{n}C'_R\,HG^{s-1}.
\end{equation}
\end{proposition}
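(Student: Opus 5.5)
We test $\divg V\ge Y$ (Lemma~\ref{lem:divV}) against the Lipschitz, compactly supported function $\chi^j$. Since $V=\kappa N+\lambda T$ is $C^1$ and $\chi^j V$ is compactly supported with Lipschitz components, Lemma~\ref{lem:lipdiv} gives $\int_M\divg(\chi^jV)=0$. Hence
\[
I_j=\int_M\chi^jY\le\int_M\chi^j\divg V=-\int_M\langle\nabla(\chi^j),V\rangle=-\int_M\kappa\,N(\chi^j)-\int_M\lambda\langle\nabla(\chi^j),T\rangle .
\]
The first term is harmless: $|N(\chi^j)|\le j\chi^{j-1}|\nabla\chi|\le j\chi^{j-1}C_\psi/(\varepsilon R)$, supported in $A_R$. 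This produces the contribution $G^s$ in $\cB_R$.

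The second term contains the uncontrolled field $T$. We handle it with Proposition~\ref{prop:cutoff}, applied with $\varphi=\chi^j=\Phi\circ r$, where $\Phi(t)=\psi\big((t-R)/(\varepsilon R)\big)^j$. Then $\Phi'\le0$, $\Phi\equiv1$ on $(-\infty,R]$ and $\Phi\equiv0$ on $[(1+\varepsilon)R,\infty)$. Take $R_1=R$ and $R_2=(1+\varepsilon)R$; since $R\ge R_\flat>R_*$ we have $R_*<R_1<R_2$. Take the weight to be $\lambda\ge0$, which is $C^1$ and constant along the leaves. Proposition~\ref{prop:cutoff} then gives
\[
-\int_M\lambda\langle\nabla\varphi,T\rangle\le-n\int_MH\lambda\,N(\varphi)-\int_M\lambda\big[\Phi''(r)|\gl r|^2+nC_1\Phi'(r)\big].
\]
The term $-n\int H\lambda N(\varphi)$ is bounded by $\int nH\lambda\, j\chi^{j-1}C_\psi/(\varepsilon R)$. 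Since $n\lambda H=2sH^2G^{s-1}$, this yields the $2sH^2G^{s-1}$ part of $\cB_R$.

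For the bracket, write $u=(r-R)/(\varepsilon R)$. Then
\[
\Phi'(r)=\frac{j}{\varepsilon R}\psi^{j-1}(u)\psi'(u),\qquad
\Phi''(r)=\frac{j(j-1)}{(\varepsilon R)^2}\psi^{j-2}\psi'^2+\frac{j}{(\varepsilon R)^2}\psi^{j-1}\psi''.
\]
The first summand of $\Phi''$ is nonnegative, and since $\lambda|\gl r|^2\ge0$ we may simply drop it. Using $|\gl r|\le1$ a.e., the remaining summand is bounded by $-\Phi''|\gl r|^2\le \frac{j}{(\varepsilon R)^2}\chi^{j-1}C'_\psi$. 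Also $-nC_1\Phi'\le nC_1\frac{j}{\varepsilon R}\chi^{j-1}C_\psi$. Factoring out $jC_\psi/(\varepsilon R)$, the total coefficient multiplying $\lambda\chi^{j-1}$ on $A_R$ is
\[
nC_1+\frac{C'_\psi}{C_\psi\varepsilon R}=C'_R .
\]
Since $\lambda=\frac{2s}{n}HG^{s-1}$, this gives the last term $\frac{2s}{n}C'_RHG^{s-1}$ of $\cB_R$.

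Summing the three contributions, all supported in $A_R$ (where $\Phi',\Phi''$ vanish otherwise), gives \eqref{eq:flux}. The step needing most care is the first integration by parts. Lemma~\ref{lem:lipdiv} requires Lipschitz components, and $V$ is $C^1$ provided $N,T$ are $C^1$, which holds for a smooth foliation. We must also confirm that the chain rule $\nabla(\chi^j)=j\chi^{j-1}\nabla\chi$ holds a.e. for $j=1$ with $\chi^0=1$, which is consistent with the interpretation in \eqref{eq:Imono}.
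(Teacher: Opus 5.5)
Your proposal is correct and follows the paper's proof essentially step for step. You integrate $\divg(\chi^jV)$ to zero via Lemma~\ref{lem:lipdiv}, and you treat the $T$-term with Proposition~\ref{prop:cutoff} for $\Phi(t)=\psi\big((t-R)/(\varepsilon R)\big)^j$ with weight $\lambda$; you then drop the nonnegative $\psi'^2$ part of $\Phi''$ and collect the $\kappa$, $nH\lambda$ and $\lambda C'_R$ contributions into $\cB_R$, exactly as the paper does.
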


\begin{proof}
Fix $R\ge R_\flat$ and an integer $j\ge1$. Recall that
\[
V=\kappa N+\lambda T,
\qquad
\kappa=G^s,
\qquad
\lambda=\frac{2s}{n}HG^{s-1},
\]
and that $\operatorname{div}V\ge Y\ge0$ by
Lemma~\ref{lem:divV}. Since $\chi$ is Lipschitz and compactly
supported, the vector field $\chi^jV$ is also Lipschitz and
compactly supported. Lemma~\ref{lem:lipdiv} therefore yields
$
\int_M\operatorname{div}(\chi^jV)=0.
$
Using the product rule for the divergence, we obtain
\[
\begin{aligned}
\int_M\chi^j\operatorname{div}V
&=-\int_M\langle\nabla(\chi^j),V\rangle\\
&=-\int_M\kappa N(\chi^j)
  -\int_M\lambda\langle\nabla(\chi^j),T\rangle.
\end{aligned}
\]

We now apply Proposition~\ref{prop:cutoff} to the function
$
\Phi(t):=\psi\left(\frac{t-R}{\varepsilon R}\right)^j.
$
Since $\psi$ is nonincreasing, equals $1$ on $(-\infty,0]$,
and vanishes on $[1,\infty)$, the function $\Phi$ is smooth
and nonincreasing, with
\[
\Phi\equiv1\quad\text{on }(-\infty,R],
\qquad
\Phi\equiv0\quad\text{on }[(1+\varepsilon)R,\infty).
\]
Moreover, $R>R_*$ by the choice of $R_\flat$. Thus all the
hypotheses of Proposition~\ref{prop:cutoff} are satisfied,
with $R_1=R$, $R_2=(1+\varepsilon)R$, and $\Phi_-=1$.
Since $\chi^j=\Phi\circ r$, that proposition gives
\[
\begin{aligned}
-\int_M\lambda\langle\nabla(\chi^j),T\rangle
\le{}&
-n\int_M H\lambda N(\chi^j)
-\int_M\lambda\Bigl[
\Phi''(r)|\gl r|^2+nC_1\Phi'(r)
\Bigr].
\end{aligned}
\]
Consequently,
\begin{equation}\label{eq:flux-intermediate}
\begin{aligned}
\int_M\chi^j\operatorname{div}V
\le{}&
-\int_M(\kappa+nH\lambda)N(\chi^j)-\int_M\lambda\Phi''(r)|\gl r|^2
-nC_1\int_M\lambda\Phi'(r).
\end{aligned}
\end{equation}
Moreover, differentiating $\Phi$ gives
\[
\Phi'(t)
=\frac{j}{\varepsilon R}
\psi\left(\frac{t-R}{\varepsilon R}\right)^{j-1}
\psi'\left(\frac{t-R}{\varepsilon R}\right)
\]
and
\[
\begin{aligned}
\Phi''(t)
={}&\frac{j(j-1)}{(\varepsilon R)^2}
\psi\left(\frac{t-R}{\varepsilon R}\right)^{j-2}
\psi'\left(\frac{t-R}{\varepsilon R}\right)^2+\frac{j}{(\varepsilon R)^2}
\psi\left(\frac{t-R}{\varepsilon R}\right)^{j-1}
\psi''\left(\frac{t-R}{\varepsilon R}\right),
\end{aligned}
\]
where the first term is omitted when $j=1$.

Because $\psi'\le0$ and $|\psi'|\le C_\psi$, we have
\[
|\Phi'(r)|
\le\frac{jC_\psi}{\varepsilon R}\chi^{j-1}.
\]
The same formula shows that $\Phi'(r)=0$ almost everywhere
outside $A_R$. Since $|\nabla r|\le1$ almost everywhere,
\[
|N(\chi^j)|
=j\chi^{j-1}|N(\chi)|
\le\frac{jC_\psi}{\varepsilon R}\chi^{j-1}
\quad\text{almost everywhere on }A_R,
\]
and $N(\chi^j)=0$ almost everywhere outside $A_R$.

Next, the first term in the formula for $\Phi''$ is
nonnegative, because $j\ge2$ whenever this term is present.
It follows that
\[
-\Phi''(r)
\le\frac{jC'_\psi}{(\varepsilon R)^2}\chi^{j-1}.
\]
Since $|\gl r|\le|\nabla r|\le1$ almost everywhere, we obtain
\[
-\Phi''(r)|\gl r|^2
\le\frac{jC'_\psi}{(\varepsilon R)^2}\chi^{j-1}
\quad\text{almost everywhere on }A_R.
\]
Furthermore, $\Phi''(r)=0$ almost everywhere outside $A_R$.
Finally, since $C_1\ge0$ and $\Phi'\le0$,
\[
-nC_1\Phi'(r)
\le\frac{jnC_1C_\psi}{\varepsilon R}\chi^{j-1},
\]
and this term also vanishes almost everywhere outside $A_R$.

By the definitions of $\kappa$ and $\lambda$,
\[
\kappa+nH\lambda
=G^s+2sH^2G^{s-1}\ge0.
\]
Using the preceding estimates in
\eqref{eq:flux-intermediate}, together with $\lambda\ge0$,
we conclude that
\[
\begin{aligned}
\int_M\chi^j\operatorname{div}V
\le{}&
\frac{jC_\psi}{\varepsilon R}
\int_{A_R}\chi^{j-1}
\bigl(\kappa+nH\lambda\bigr)+\int_{A_R}\lambda\chi^{j-1}
\left(
\frac{jC'_\psi}{(\varepsilon R)^2}
+\frac{jnC_1C_\psi}{\varepsilon R}
\right)\\
={}&\frac{jC_\psi}{\varepsilon R}
\int_{A_R}\chi^{j-1}
\left[
G^s+2sH^2G^{s-1}
+\lambda\left(
nC_1+\frac{C'_\psi}{C_\psi\varepsilon R}
\right)
\right].
\end{aligned}
\]
Recalling the definition
$
C'_R=nC_1+\frac{C'_\psi}{C_\psi\varepsilon R}
$
and substituting $\lambda=\frac{2s}{n}HG^{s-1}$, we obtain
\[
\int_M\chi^j\operatorname{div}V
\le\frac{jC_\psi}{\varepsilon R}
\int_{A_R}\chi^{j-1}
\left[
G^s+2sH^2G^{s-1}
+\frac{2s}{n}C'_RHG^{s-1}
\right].
\]
The expression in brackets is precisely $\mathcal B_R$.
Finally, since $\operatorname{div}V\ge Y$ and $\chi^j\ge0$,
\[
I_j=\int_M\chi^jY
\le\int_M\chi^j\operatorname{div}V.
\]
Combining the last two inequalities proves
\[
I_j\le
\frac{jC_\psi}{\varepsilon R}
\int_{A_R}\chi^{j-1}\mathcal B_R,
\]
as claimed.
\end{proof}

\begin{lemma}[top-order bound]\label{lem:top}
On $M$, $\cB_R\le\theta'_RY$, where
\begin{equation}\label{eq:theta}
\theta'_R=\theta'(s,c_0,C'_R):=\max\Big\{\frac{2s+1}{(2s-n)c_0},\ \frac{(2s+1)c_0+\frac{2s}{n}C'_R}{2s\,\delta_0^2}\Big\}.
\end{equation}
\end{lemma}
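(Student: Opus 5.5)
The plan is a pointwise comparison, split according to whether $G$ vanishes. On $M\setminus U$ we have $G=0$, so $\cB_R=0=Y$ and the inequality is trivial; note $C'_R>0$, so $\theta'_R>0$. On $U=\{H>c_0\}$ we have $H>c_0>0$, $G=H^2-c_0^2>0$, and $H^2=G+c_0^2$. The first step is to rewrite $\cB_R$ in terms of $G$ and $H$ alone:
\[
\cB_R=G^s+2s(G+c_0^2)G^{s-1}+\tfrac{2s}{n}C'_RHG^{s-1}
=(2s+1)G^s+\Big(2sc_0^2+\tfrac{2s}{n}C'_RH\Big)G^{s-1}.
\]
Recall also
\[
Y=(2s-n)HG^s+2s\delta_0^2HG^{s-1}.
\]
Both are combinations of the two ``monomials'' $G^s$ and $G^{s-1}$ with nonnegative coefficients. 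It therefore suffices to dominate each term of $\cB_R$ by $\theta'_R$ times the corresponding term of $Y$.

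For the $G^s$ term, use $H>c_0$:
\[
(2s+1)G^s\le \frac{2s+1}{(2s-n)c_0}(2s-n)HG^s .
\]
This is at most $\theta'_R(2s-n)HG^s$, by the first entry of the maximum in \eqref{eq:theta}.

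For the $G^{s-1}$ term, factor out $H$:
\[
2sc_0^2+\tfrac{2s}{n}C'_RH=H\Big(\tfrac{2sc_0^2}{H}+\tfrac{2s}{n}C'_R\Big)\le H\Big(2sc_0+\tfrac{2s}{n}C'_R\Big),
\]
again using $H>c_0$. Since $2sc_0\le(2s+1)c_0$, this is at most
\[
\frac{(2s+1)c_0+\frac{2s}{n}C'_R}{2s\delta_0^2}\cdot 2s\delta_0^2 H ,
\]
which is bounded by $\theta'_R\cdot 2s\delta_0^2H$ via the second entry of the maximum. Multiplying by $G^{s-1}>0$ and adding the two estimates gives $\cB_R\le\theta'_RY$ on $U$.

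There is no real obstacle here; the estimate is elementary algebra. The only points needing care are the substitution $H^2=G+c_0^2$, which produces the constant $2sc_0^2$ in the lower-order term, and the positivity of the coefficients $2s-n$ and $\delta_0^2$, so that the term-by-term comparison is legitimate. The slack in the stated constant (it has $(2s+1)c_0$ where $2sc_0$ would suffice) is harmless.
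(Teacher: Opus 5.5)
Your proof is correct and follows essentially the paper's argument. Off $U=\{H>c_0\}$ both sides vanish; on $U$ you dominate $\cB_R$ term by term by the $G^s$ and $G^{s-1}$ parts of $Y$, using only $H>c_0$ and $G=H^2-c_0^2$. The paper instead divides by $HG^{s-1}$ and uses $G/H\le H$ together with $H\le G/c_0+c_0$, whereas you substitute $H^2=G+c_0^2$ directly, which is why you correctly notice that $2sc_0$ would suffice in place of $(2s+1)c_0$.
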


\begin{proof}
Recall that
$
\mathcal B_R
=G^s+2sH^2G^{s-1}
+\frac{2s}{n}C'_RHG^{s-1},
$
whereas
$Y=(2s-n)HG^s+2s\delta_0^2HG^{s-1}$. Both $\mathcal B_R$ and $Y$ vanish on $M\setminus U$, where
$U=\{H>c_0\}$. Thus, it suffices to prove the desired
inequality on $U$.

Fix $x\in U$. Since $H(x)>c_0>0$, we have
$HG^{s-1}>0$. Dividing the inequality
$\mathcal B_R\le\theta'_R Y$ by this positive quantity, we
see that it is equivalent to
\[
\frac{G}{H}+2sH+\frac{2s}{n}C'_R
\le
\theta'_R\bigl((2s-n)G+2s\delta_0^2\bigr).
\]
On $U$, the identity $G=H^2-c_0^2$ gives
$
0<G<H^2.
$
Consequently,
\[
\frac{G}{H}\le H.
\]
Moreover, since $H>0$ and $G=H^2-c_0^2$, we have
\[
H=\frac{G+c_0^2}{H}
=\frac{G}{H}+\frac{c_0^2}{H}
\le \frac{G}{c_0}+c_0,
\]
where we used $H\ge c_0$ in the last inequality. It follows
that
\[
\begin{aligned}
\frac{G}{H}+2sH+\frac{2s}{n}C'_R
&\le (2s+1)H+\frac{2s}{n}C'_R\\
&\le \frac{2s+1}{c_0}G
+(2s+1)c_0+\frac{2s}{n}C'_R.
\end{aligned}
\]
By the definition of $\theta'_R$,
\[
\theta'_R\ge
\frac{2s+1}{(2s-n)c_0}
\quad\text{and}\quad
\theta'_R\ge
\frac{(2s+1)c_0+\frac{2s}{n}C'_R}
{2s\delta_0^2}.
\]
Since $2s>n$, both $(2s-n)G$ and $2s\delta_0^2$ are
nonnegative. Hence
\[
\frac{2s+1}{c_0}G
\le \theta'_R(2s-n)G
\]
and
\[
(2s+1)c_0+\frac{2s}{n}C'_R
\le \theta'_R\,2s\delta_0^2.
\]
Adding these inequalities yields
\[
\frac{G}{H}+2sH+\frac{2s}{n}C'_R
\le\theta'_R\bigl((2s-n)G+2s\delta_0^2\bigr).
\]
Multiplying by $HG^{s-1}>0$, we conclude that
$
\mathcal B_R\le\theta'_R Y
$
on $U$.
Together with the vanishing of both sides on $M\setminus U$,
this proves the lemma.
\end{proof}

\begin{corollary}\label{cor:iter}
For $R\ge R_0\ge R_\flat$ and $j\ge1$, $I_j\le\dfrac{jC_\psi\theta'_{R_0}}{\varepsilon R}\,I_{j-1}$.
\end{corollary}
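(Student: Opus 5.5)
We combine Proposition~\ref{prop:flux} with Lemma~\ref{lem:top} and then compare the resulting integral with $I_{j-1}$. Fix $R\ge R_0\ge R_\flat$ and an integer $j\ge1$. Proposition~\ref{prop:flux} gives
\[
I_j\le\frac{jC_\psi}{\varepsilon R}\int_{A_R}\chi^{j-1}\cB_R .
\]
Lemma~\ref{lem:top} then gives $\cB_R\le\theta'_RY$ pointwise on $M$. Since $\chi^{j-1}\ge0$ (with $\chi^0=1$), this yields
\[
\int_{A_R}\chi^{j-1}\cB_R\le\theta'_R\int_{A_R}\chi^{j-1}Y .
\]

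Next we replace $\theta'_R$ by $\theta'_{R_0}$. By \eqref{eq:CR}, the map $R\mapsto C'_R$ is nonincreasing, so $C'_R\le C'_{R_0}$ for $R\ge R_0$. In \eqref{eq:theta}, the first entry of the maximum does not depend on $R$, and the second entry is increasing in $C'_R$ because $s>0$, $n\ge1$ and $\delta_0^2>0$. Hence $\theta'_R\le\theta'_{R_0}$, which is where the hypothesis $R\ge R_0$ is used.

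It remains to compare $\int_{A_R}\chi^{j-1}Y$ with $I_{j-1}$. Since $Y\ge0$ and $A_R\subset\{r\le(1+\varepsilon)R\}$, we have
\[
\int_{A_R}\chi^{j-1}Y\le\int_{\{r\le(1+\varepsilon)R\}}\chi^{j-1}Y .
\]
For $j\ge2$, the support of $\chi$ lies in $\{r\le(1+\varepsilon)R\}$, so the right-hand side equals $\int_M\chi^{j-1}Y=I_{j-1}$. For $j=1$, it is $\int_{\{r\le(1+\varepsilon)R\}}Y=I_0$ by definition. Chaining the three inequalities gives
\[
I_j\le\frac{jC_\psi\theta'_{R_0}}{\varepsilon R}I_{j-1}.
\]

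There is no real obstacle here: the statement is a direct assembly of results already proved. The only points needing care are the monotonicity of $\theta'$ in $C'_R$ and the convention $\chi^0=1$ on $\{r\le(1+\varepsilon)R\}$ in the case $j=1$.
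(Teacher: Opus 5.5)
Your proposal is correct and matches the paper's proof: it combines Proposition~\ref{prop:flux} with Lemma~\ref{lem:top}, uses that $C'_R$ is nonincreasing and $\theta'$ is nondecreasing in $C'_R$ to pass from $\theta'_R$ to $\theta'_{R_0}$, and splits into $j\ge2$ and $j=1$ to compare with $I_{j-1}$. The only difference is that you route the $j\ge2$ case through the intermediate integral over $\{r\le(1+\varepsilon)R\}$, which is equivalent.
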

\begin{proof}
By Proposition~\ref{prop:flux} and Lemma~\ref{lem:top}, we obtain
\[
I_j
\le
\frac{jC_\psi\theta'_R}{\varepsilon R}
\int_{A_R}\chi^{j-1}Y.
\]
Since $C'_R$ is nonincreasing in $R$ and $\theta'_R$ is
nondecreasing as a function of $C'_R$, the condition
$R\ge R_0$ implies
$
\theta'_R\le\theta'_{R_0}.
$
Therefore,
\[
I_j
\le
\frac{jC_\psi\theta'_{R_0}}{\varepsilon R}
\int_{A_R}\chi^{j-1}Y.
\]

If $j\ge2$, then $0\le\chi\le1$ and $Y\ge0$, so
\[
\int_{A_R}\chi^{j-1}Y
\le
\int_M\chi^{j-1}Y
=I_{j-1}.
\]
If $j=1$, the definition of $I_0$ and the inclusion
\[
A_R\subset\{r\le(1+\varepsilon)R\}
\]
give
\[
\int_{A_R}Y
\le
\int_{\{r\le(1+\varepsilon)R\}}Y
=I_0.
\]
Combining these estimates yields
\[
I_j\le
\frac{jC_\psi\theta'_{R_0}}{\varepsilon R}\,I_{j-1}
\]
for every $j\ge1$, as claimed.
\end{proof}

\begin{lemma}[a priori bound]\label{lem:apriori}
There is a constant $C_2$, depending only on $n,s,c_0,K_0,\psi,\varepsilon$ and $R_*$, such that $J(R)\le C_2\,\mathcal V(R)$ for all $R\ge R_\flat$.
\end{lemma}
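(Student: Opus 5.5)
The plan is to run the flow inequality of Proposition~\ref{prop:flux} once, with a fixed exponent $j$, and to exploit the gap in homogeneity between $\cB_R$ and $Y$. For large $H$, $\cB_R$ grows like $H^{2s}$ while $Y$ grows like $H^{2s+1}$. Lemma~\ref{lem:top} cannot be used here, since it compares $\cB_R$ and $Y$ at the same order. Instead I will prove a sublinear bound of the form $\cB_R\le C_3\bigl(1+Y^{2s/(2s+1)}\bigr)$. Young's inequality then lets me absorb the $Y$-part into the left-hand side $I_j$, and the constant part is controlled by the volume bound \eqref{eq:BG}.

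\emph{Step 1 (pointwise bound).} Since $R\ge R_\flat$ and $C'_R$ is nonincreasing, we have $C'_R\le C'_{R_\flat}=nC_1+C'_\psi/(C_\psi\varepsilon R_\flat)$, and this depends only on $n,K_0,R_*,\psi,\varepsilon$. Off $U=\{H>c_0\}$ both $\cB_R$ and $Y$ vanish. On $U$ I split into two cases, with a threshold $h_0:=\sqrt2\,c_0$ (any fixed multiple larger than $1$ would do).
\begin{itemize}
\item If $c_0<H\le h_0$, then $G\le c_0^2$, so $\cB_R$ is bounded by a constant depending on $n,s,c_0,C'_{R_\flat}$.
\item If $H>h_0$, then $G\ge H^2/2$. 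This gives $\cB_R\le C H^{2s}$ (using $G\le H^2$ and $H\ge c_0$). It also gives $Y\ge(2s-n)HG^s\ge(2s-n)2^{-s}H^{2s+1}$, hence $H^{2s}\le C\,Y^{2s/(2s+1)}$.
\end{itemize}
Combining the two cases gives $\cB_R\le C_3\bigl(1+Y^{2s/(2s+1)}\bigr)$ on $M$, with $C_3=C_3(n,s,c_0,K_0,\psi,\varepsilon,R_*)$.

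\emph{Step 2 (choice of $j$ and Young).} I take $j:=2s+1$. Young's inequality with exponents $\frac{2s+1}{2s}$ and $2s+1$ gives, for any $\eta>0$,
\[
\chi^{j-1}Y^{2s/(2s+1)}\le \eta\,\chi^{(j-1)\frac{2s+1}{2s}}Y+C_\eta=\eta\,\chi^{j}Y+C_\eta .
\]
This is exactly where $j=2s+1$ is needed: it makes the cutoff power on the right equal to $\chi^j$. Inserting this into \eqref{eq:flux}, and using $1/(\varepsilon R)\le 1/(\varepsilon R_\flat)$ together with $A_R\subset\{r\le(1+\varepsilon)R\}$, I get
\[
I_j\le \frac{jC_\psi C_3}{\varepsilon R_\flat}\Bigl[\eta\, I_j+(1+C_\eta)\operatorname{vol}\{r\le(1+\varepsilon)R\}\Bigr].
\]

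\emph{Step 3 (absorption and conclusion).} I choose $\eta$ so that $jC_\psi C_3\eta/(\varepsilon R_\flat)=\tfrac12$. The integral $I_j$ is finite, so it can be absorbed into the left-hand side, which yields $I_j\le C\operatorname{vol}\{r\le(1+\varepsilon)R\}\le C\,\mathcal V(R)$ by \eqref{eq:BG}. Finally, \eqref{eq:Imono} gives $J(R)\le I_j$, which proves the lemma with $C_2$ depending only on the listed quantities.

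The main obstacle is Step 1: obtaining the sublinear comparison uniformly near $H=c_0$, where $G$ degenerates. The case split at $h_0$ handles this, since in the degenerate region $\cB_R$ is simply bounded, and the resulting constant is absorbed into the volume term.
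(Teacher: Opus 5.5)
Your proposal is correct and follows essentially the same route as the paper: apply Proposition~\ref{prop:flux} once with $j=2s+1$, split $\{H>c_0\}$ at a fixed multiple of $c_0$ (the paper uses $2c_0$), bound $\cB_R$ by a constant plus a sublinear power of the top-order quantity, absorb via Young's inequality, and conclude with \eqref{eq:BG} and \eqref{eq:Imono}. The differences are cosmetic: the paper works with $F=HG^s$ and the two powers $F^{(q-1)/q}$, $F^{(q-2)/q}$ (absorbing via $Y\ge(2s-n)F$), whereas you compare directly with $Y$ and use $H\ge c_0$ to merge the lower-order term into the single power $Y^{2s/(2s+1)}$.
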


\begin{proof}
Set $q:=2s+1$ and fix $R\ge R_\flat$. By
Proposition~\ref{prop:flux}, applied with $j=q$, we have
\[
I_q\le
\frac{qC_\psi}{\varepsilon R}
\int_{A_R}\chi^{q-1}\cB_R.
\]
Recall that $\cB_R=0$ on $M\setminus U$, where
$U=\{H>c_0\}$. We split $U=U_1\cup U_2$, where
\[
U_1:=\{c_0<H<2c_0\},
\qquad
U_2:=\{H\ge2c_0\}.
\]

On $U_1$, we have $G<3c_0^2$ and $H<2c_0$.
Moreover, $C'_R\le C'_{R_\flat}$. Consequently, there
exists a constant $B_0>0$, depending only on
$n,s,c_0,K_0,\psi,\varepsilon$ and $R_*$, such that
\[
\cB_R\le B_0\qquad\text{on }U_1.
\]

On $U_2$, the inequalities
\[
G=H^2-c_0^2\ge\frac34H^2,
\qquad
F=HG^s\le H^{2s+1}=H^q
\]
imply $H\ge F^{1/q}$. Hence
\[
G^s=\frac{F}{H}
\le F^{\frac{q-1}{q}},
\]
and
\[
H^2G^{s-1}
=F\frac{H}{G}
\le\frac{4F}{3H}
\le\frac43F^{\frac{q-1}{q}}.
\]
Similarly,
\[
HG^{s-1}
=\frac{F}{G}
\le\frac{4F}{3H^2}
\le\frac43F^{\frac{q-2}{q}}.
\]
It follows that
\[
\cB_R\le
B_1\left(
F^{\frac{q-1}{q}}+
F^{\frac{q-2}{q}}
\right)
\qquad\text{on }U_2,
\]
where
$
B_1:=
1+\frac{8s}{3}
+\frac{8s}{3n}C'_{R_\flat}.
$

Define $X:=\chi^qF$. Since $0\le\chi\le1$, we obtain
\[
\chi^{q-1}F^{\frac{q-1}{q}}
=X^{\frac{q-1}{q}},
\qquad
\chi^{q-1}F^{\frac{q-2}{q}}
\le X^{\frac{q-2}{q}}.
\]
Combining the estimates on $U_1$ and $U_2$, we conclude
that
\[
\begin{aligned}
I_q
&\le \frac{qC_\psi B_0}{\varepsilon R}
       \operatorname{vol}(A_R)+
m\int_{A_R}
\left(
X^{\frac{q-1}{q}}+
X^{\frac{q-2}{q}}
\right),
\end{aligned}
\]
where
$
m:=\frac{qC_\psi B_1}{\varepsilon R}
\le
m_1:=\frac{qC_\psi B_1}{\varepsilon R_\flat}.
$
For every $0<\vartheta<1$, Young's inequality gives
\[
mX^\vartheta
\le \eta X+
C(\eta,\vartheta)m^{\frac{1}{1-\vartheta}},
\]
for any $\eta>0$, where $C(\eta,\vartheta)$ depends
only on $\eta$ and $\vartheta$. Applying this inequality
with $\vartheta=(q-1)/q$ and $\vartheta=(q-2)/q$, we find
\[
\begin{aligned}
I_q
&\le
\left[
\frac{qC_\psi B_0}{\varepsilon R_\flat}
+C_\eta\left(m_1^q+m_1^{q/2}\right)
\right]\operatorname{vol}(A_R)+2\eta\int_M X,
\end{aligned}
\]
where $C_\eta$ depends only on $\eta$ and $q$. Since
$
Y=(2s-n)F+2s\delta_0^2HG^{s-1}
\ge(2s-n)F
$
and $X=\chi^qF$, we have
\[
\int_M X\le\frac{I_q}{2s-n}.
\]
Choose $\eta=(2s-n)/4>0$. Then
\[
I_q\le C\,\operatorname{vol}(A_R)+\frac12 I_q,
\]
where $C$ depends only on
$n,s,c_0,K_0,\psi,\varepsilon$ and $R_*$.
Absorbing the last term into the left-hand side yields
\[
I_q\le 2C\,\operatorname{vol}(A_R)
\le 2C\,\operatorname{vol}
\{r\le(1+\varepsilon)R\}.
\]
Finally, since $\chi=1$ on $\{r\le R\}$ and $Y\ge0$,
\[
J(R)=\int_{\{r\le R\}}Y
\le\int_M\chi^qY=I_q.
\]
Using the volume estimate \eqref{eq:BG}, we conclude that
\[
J(R)\le C_2\,\mathcal V(R),
\]
after enlarging the constant $C_2$ if necessary.
This proves the lemma.
\end{proof}

\subsection{Conclusion of the proof}

For $n\ge1$ and $K_0,C_1\ge0$, define
\begin{equation}\label{eq:cstar}
c_*(n,K_0;C_1):=
\frac12\left[
n\sqrt{K_0}
+\sqrt{(n^2+4)K_0+4n\sqrt{K_0}\,C_1}
\right].
\end{equation}
This is the nonnegative root of
\[
c^2-n\sqrt{K_0}\,c-K_0
-n\sqrt{K_0}\,C_1=0.
\]
The function $c_*$ is continuous and nondecreasing in $C_1$.
Moreover, since $n\ge1$ and $C_1\ge0$, we have
\[
c_*(n,K_0;C_1)
\ge
\frac12\left(n+\sqrt{n^2+4}\right)\sqrt{K_0}
\ge \sqrt{K_0}.
\]
When $K_0=0$, the defining formula gives
\[
c_*(n,0;C_1)=0.
\]
Finally, taking $C_1=\sqrt{K_0}$ in \eqref{eq:cstar}, we obtain
\begin{equation}\label{eq:cstar-explicit}
\begin{aligned}
c_*(n,K_0;\sqrt{K_0})
&=\frac12\left[
n\sqrt{K_0}
+\sqrt{(n^2+4+4n)K_0}
\right]\\
&=(n+1)\sqrt{K_0}.
\end{aligned}
\end{equation}

\begin{proposition}\label{prop:main}
For every $R_*>0$, $|H|\le c_*\big(n,K_0;\ct_{K_0}(R_*)\big)$ on $M$.
\end{proposition}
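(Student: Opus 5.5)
Argue by contradiction. Fix $R_*>0$, put $C_1=\ct_{K_0}(R_*)$, and suppose some point has $|H|>c_*(n,K_0;C_1)$. Replacing $N$ by $-N$ if necessary (which flips the sign of $H$), we may assume $H(p)>c_*$ at some $p$. Since $c_*\ge\sqrt{K_0}$, we can choose $c_0$ with $c_*<c_0<H(p)$; then $c_0>\sqrt{K_0}$ and the set $U=\{H>c_0\}$ is nonempty. Run the construction of Subsection~\ref{subsec:flow} with this $c_0$ and some $s\ge3$, $2s>n$, to be chosen large. Then $Y>0$ on $U$, so $J(R_0)>0$ for some $R_0\ge R_\flat$.

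The main step is to iterate Corollary~\ref{cor:iter} to get exponential growth of $J$. For $R\ge R_0$, chaining $j=1,\dots,k$ gives
\[
J(R)\le I_k\le\frac{k!\,(C_\psi\theta'_{R_0})^k}{(\varepsilon R)^k}\,I_0=\frac{k!\,(C_\psi\theta'_{R_0})^k}{(\varepsilon R)^k}\,J\big((1+\varepsilon)R\big).
\]
Choose $k\approx \varepsilon R/(C_\psi\theta'_{R_0})$ and use Stirling. This yields $J((1+\varepsilon)R)\ge c\,R^{-1/2}e^{\varepsilon R/(C_\psi\theta'_{R_0})}J(R)$. Iterating along $R_m=(1+\varepsilon)^mR_0$ gives $\log J(R)\gtrsim R/(C_\psi\theta'_{R_0})$ as $R\to\infty$, up to lower order terms. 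Tracking the factor $\varepsilon$ and the geometric spacing, the growth rate is roughly $\frac{1}{(1+\varepsilon)C_\psi\theta'_{R_0}}$.

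Compare this with Lemma~\ref{lem:apriori}, which gives $J(R)\le C_2\mathcal V(R)$, a growth rate of $n\sqrt{K_0}(1+\varepsilon)$. Positivity of $J(R_0)$ then forces
\[
\frac{1}{C_\psi\theta'_{R_0}}\le n\sqrt{K_0}(1+\varepsilon)^2 .
\]
Letting $\tau,\varepsilon\to0$ (so $C_\psi\to1$) and $R_0\to\infty$ (so $C'_{R_0}\to nC_1$) gives $\theta'(s,c_0,nC_1)\ge 1/(n\sqrt{K_0})$. If $K_0=0$ this is already a contradiction, since $\theta'$ is finite. Otherwise let $s\to\infty$. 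The first entry of $\theta'$ tends to $1/c_0$. The second tends to $(c_0+C_1)/\delta_0^2=(c_0+C_1)/(c_0^2-K_0)$. So either $1/c_0\ge1/(n\sqrt{K_0})$, i.e.\ $c_0\le n\sqrt{K_0}<c_*$, a contradiction. Or $n\sqrt{K_0}(c_0+C_1)\ge c_0^2-K_0$, i.e.\ $c_0^2-n\sqrt{K_0}c_0-K_0-n\sqrt{K_0}C_1\le0$, which gives $c_0\le c_*$ and again a contradiction. Hence $H\le c_*$, and by symmetry $|H|\le c_*$.

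The main obstacle is the quantitative iteration. We need to extract a clean exponential lower bound from the factorial recursion, with constants uniform enough to pass to the limits $\varepsilon,\tau\to0$, $R_0\to\infty$, $s\to\infty$ in the right order. These limits are taken after comparing growth rates, for fixed $c_0$. The polynomial factors ($R^{n+1}$, $\sqrt{k}$, $C_2$) only affect subexponential terms. A cleaner alternative to the dyadic chaining is to write $J((1+\varepsilon)R)/J(R)\ge \sup_k (\varepsilon R)^k/(k!(C_\psi\theta')^k)$ and sum the logarithms over $R_m$.
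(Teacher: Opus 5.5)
Your proposal is correct and is essentially the paper's argument: the same Stirling-optimized iteration of Corollary~\ref{cor:iter}, the same chaining along $(1+\varepsilon)^m R$ against the a priori bound of Lemma~\ref{lem:apriori}, and the same limits $s\to\infty$, $C_\psi\to1$, $\varepsilon\to0$, $C'_{R_0}\to nC_1$. You phrase it by contradiction and take these limits after the growth-rate comparison, whereas Step~3 of the paper fixes the parameters in advance; the extra factor $(1+\varepsilon)$ you lose by comparing at general $R$ rather than along the sequence $\varrho_k$ is harmless since $\varepsilon\to0$.
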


\begin{proof}We divide the proof into four steps. Fix $R_*>0$ and set $C_1:=\ct_{K_0}(R_*)$, $c_*:=c_*(n,K_0;C_1)$.
We prove first that $H\le c_*$ on $M$. The opposite
inequality will follow by reversing the orientation of the
foliation.

\medskip
\noindent
\textit{Step 1: Forced exponential growth.}
Fix $R_0\ge R_\flat$ and set
$
\theta:=C_\psi\theta'_{R_0}>0.
$
For every
$
R\ge\max\left\{R_0,\frac{\theta}{\varepsilon}\right\},
$
let
$
q:=\left\lfloor\frac{\varepsilon R}{\theta}\right\rfloor
\ge1.
$
By iterating Corollary~\ref{cor:iter}, we obtain
\[
I_q
\le
q!\left(\frac{C_\psi\theta'_{R_0}}
{\varepsilon R}\right)^q I_0
=
q!\left(\frac{\theta}{\varepsilon R}\right)^q I_0.
\]
Since $q\theta\le\varepsilon R$, it follows that
\[
I_q\le\frac{q!}{q^q}I_0.
\]
By \eqref{eq:Imono} and the definition of $I_0$,
\[
J(R)\le I_q,
\qquad
I_0=J((1+\varepsilon)R).
\]
Consequently,
\[
J(R)\le\frac{q!}{q^q}J((1+\varepsilon)R).
\]
Using Stirling's estimate
\[
q!\le e\sqrt q\left(\frac qe\right)^q,
\]
we obtain
\[
J(R)\le e\sqrt q\,e^{-q}J((1+\varepsilon)R).
\]
Moreover, the definition of $q$ gives
\[
\frac{\varepsilon R}{\theta}-1
<q\le\frac{\varepsilon R}{\theta}.
\]
In particular,
\[
\sqrt q\le\sqrt{\frac{\varepsilon R}{\theta}}
\quad\text{and}\quad
e^{-q}<e\exp\left(-\frac{\varepsilon R}{\theta}\right).
\]
Thus
\begin{equation}\label{eq:step1}
J(R)\le
e^2\sqrt{\frac{\varepsilon R}{\theta}}\,
\exp\left(-\frac{\varepsilon R}{\theta}\right)
J((1+\varepsilon)R)
\end{equation}
for every
$R\ge\max\{R_0,\theta/\varepsilon\}$.

\medskip
\noindent
\textit{Step 2: Comparison with the volume growth.}
Suppose that
\begin{equation}\label{eq:rate}
n\sqrt{K_0}(1+\varepsilon)\theta<1.
\end{equation}
We show that this condition forces $J\equiv0$.

Fix
$
\bar R\ge\max\left\{R_0,\frac{\theta}{\varepsilon}\right\}
$
and define
\[
\varrho_i:=(1+\varepsilon)^i\bar R,
\qquad i=0,1,2,\ldots.
\]
Applying \eqref{eq:step1} successively at
$R=\varrho_0,\ldots,\varrho_{k-1}$, we find
\[
J(\bar R)
\le
\left[
\prod_{i=0}^{k-1}
e^2\sqrt{\frac{\varepsilon\varrho_i}{\theta}}
\right]
\exp\left(
-\frac{\varepsilon}{\theta}
\sum_{i=0}^{k-1}\varrho_i
\right)
J(\varrho_k).
\]
The geometric-series identity gives
\[
\sum_{i=0}^{k-1}\varrho_i
=\frac{\varrho_k-\bar R}{\varepsilon}.
\]
Furthermore, $\varrho_i\le\varrho_k$ for $i<k$, and hence
\[
\prod_{i=0}^{k-1}
e^2\sqrt{\frac{\varepsilon\varrho_i}{\theta}}
\le
\left(e^2\sqrt{\frac{\varepsilon\varrho_k}{\theta}}\right)^k.
\]
It follows that
\[
J(\bar R)\le
\left(e^2\sqrt{\frac{\varepsilon\varrho_k}{\theta}}\right)^k
\exp\left(-\frac{\varrho_k-\bar R}{\theta}\right)
J(\varrho_k).
\]

By Lemma~\ref{lem:apriori} and the volume estimate
\eqref{eq:BG},
\[
J(\varrho_k)
\le
C_2\omega_n
\big((1+\varepsilon)\varrho_k\big)^{n+1}
\exp\left(
n\sqrt{K_0}(1+\varepsilon)\varrho_k
\right).
\]
Therefore,
\begin{align*}
J(\bar R)
&\le
\left(e^2\sqrt{\frac{\varepsilon\varrho_k}{\theta}}\right)^k
\exp\left(-\frac{\varrho_k-\bar R}{\theta}\right) 
C_2\omega_n
\big((1+\varepsilon)\varrho_k\big)^{n+1}
\exp\left(
n\sqrt{K_0}(1+\varepsilon)\varrho_k
\right).
\end{align*}

Since
$
k=\frac{\log(\varrho_k/\bar R)}
{\log(1+\varepsilon)},
$
we have $k=O(\log\varrho_k)$ as $k\to\infty$.
Taking logarithms of the positive expression on the
right-hand side above, its logarithm is
\[
-\left[
\frac1\theta-n\sqrt{K_0}(1+\varepsilon)
\right]\varrho_k
+O\big((\log\varrho_k)^2\big).
\]
Indeed, the product of the $k$ factors contributes
$O(k\log\varrho_k)=O((\log\varrho_k)^2)$, while the
remaining polynomial factors contribute only
$O(\log\varrho_k)$.

By \eqref{eq:rate}, the coefficient
\[
\frac1\theta-n\sqrt{K_0}(1+\varepsilon)
\]
is strictly positive. Since
\[
\frac{(\log\varrho_k)^2}{\varrho_k}\longrightarrow0,
\]
the right-hand side of the estimate for $J(\bar R)$
tends to zero as $k\to\infty$. Hence
$
J(\bar R)=0.
$
As $\bar R$ was arbitrary above the fixed threshold,
$J(R)=0$ for all sufficiently large $R$. Because $J$ is
nonnegative and nondecreasing, this implies
\[
J(R)=0\qquad\text{for every }R>0.
\]
Thus $J\equiv0$. Since $Y\ge0$ is continuous and
\[
J(R)=\int_{\{r\le R\}}Y,
\]
we conclude that $Y\equiv0$ on $M$.

Recall that
\[
Y=(2s-n)HG^s+2s\delta_0^2HG^{s-1},
\qquad
\delta_0^2=c_0^2-K_0>0.
\]
On $U=\{H>c_0\}$, we have $H>0$ and $G>0$.
Consequently,
\[
Y\ge 2s\delta_0^2HG^{s-1}>0
\qquad\text{on }U,
\]
contradicting $Y\equiv0$ unless $U=\varnothing$.
Therefore,
\[
H\le c_0\qquad\text{on }M,
\]
provided the parameters satisfy \eqref{eq:rate}.

\medskip
\noindent
\textit{Step 3: Choice of the parameters.}
Let $c_0>c_*$. By the properties of $c_*$,
\[
c_0>\sqrt{K_0},
\qquad
\delta_0^2:=c_0^2-K_0>0.
\]
We show that the parameters in the preceding steps can
be chosen so that \eqref{eq:rate} holds.

If $K_0=0$, then \eqref{eq:rate} holds automatically,
since its left-hand side is zero. We may therefore assume
that $K_0>0$.

By the definition of $c_*$, the inequality $c_0>c_*$ is
equivalent to
\[
c_0^2-n\sqrt{K_0}\,c_0-K_0
-n\sqrt{K_0}\,C_1>0.
\]
Equivalently,
\begin{equation}\label{eq:cond}
n\sqrt{K_0}\,
\frac{c_0+C_1}{c_0^2-K_0}<1.
\end{equation}
Moreover,
\[
\frac1{c_0}
\le
\frac{c_0+C_1}{c_0^2-K_0},
\]
because this inequality is equivalent to
\[
c_0(c_0+C_1)-(c_0^2-K_0)
=c_0C_1+K_0\ge0.
\]

Recall that
\[
\theta'(s,c_0,C'_R)
=
\max\left\{
\frac{2s+1}{(2s-n)c_0},
\frac{(2s+1)c_0+\frac{2s}{n}C'_R}
{2s(c_0^2-K_0)}
\right\}.
\]
For fixed $\sigma>0$, as $s\to\infty$, the two
entries in this maximum, with $C'_R=nC_1+\sigma$,
converge respectively to
\[
\frac1{c_0}
\quad\text{and}\quad
\frac{c_0+C_1+\sigma/n}{c_0^2-K_0}.
\]
By \eqref{eq:cond}, we can first choose $\sigma>0$
sufficiently small so that
\[
n\sqrt{K_0}\,
\frac{c_0+C_1+\sigma/n}{c_0^2-K_0}<1.
\]
Together with
\[
\frac1{c_0}
\le\frac{c_0+C_1}{c_0^2-K_0},
\]
this ensures that both limiting entries satisfy the
strict inequality required by the rate condition.

It follows that we can choose $\tau>0$ sufficiently
small, $\varepsilon\in(0,1]$ sufficiently small, and
an integer $s$ sufficiently large with $s\geq 3$ and $2s>n$, such that
\begin{equation}\label{eq:rate-parameters}
n\sqrt{K_0}(1+\varepsilon)(1+\tau)
\theta'(s,c_0,nC_1+\sigma)<1.
\end{equation}
Choose a cutoff function $\psi$ with
$C_\psi\le1+\tau$. Finally, by the definition
\[
C'_R=nC_1+\frac{C'_\psi}{C_\psi\varepsilon R},
\]
we can choose $R_0\ge R_\flat$ sufficiently large that
\[
C'_{R_0}\le nC_1+\sigma.
\]
Since $\theta'$ is nondecreasing in its last argument,
\[
\theta'_{R_0}
=\theta'(s,c_0,C'_{R_0})
\le\theta'(s,c_0,nC_1+\sigma).
\]
Thus, with $\theta=C_\psi\theta'_{R_0}$, we obtain
\[
n\sqrt{K_0}(1+\varepsilon)\theta
\le
n\sqrt{K_0}(1+\varepsilon)(1+\tau)
\theta'(s,c_0,nC_1+\sigma)<1.
\]
Hence \eqref{eq:rate} holds. Step~2 now yields
\[
H\le c_0\qquad\text{on }M.
\]
Since this conclusion holds for every $c_0>c_*$,
letting $c_0\downarrow c_*$ gives
\[
H\le c_*\qquad\text{on }M.
\]

\medskip
\noindent
\textit{Step 4: The opposite inequality.}
Reverse the orientation of the foliation by replacing
$N$ with $-N$. The corresponding mean curvature becomes
$-H$, while
\[
\nabla_{-N}(-N)=\nabla_NN=T.
\]
Thus the vector field $T$, the underlying foliation,
and the sectional-curvature assumption remain unchanged.
Applying the upper bound just proved to the reversed
orientation gives
\[
-H\le c_*\qquad\text{on }M.
\]
Combining this with $H\le c_*$, we conclude that
\[
|H|\le c_*
=c_*\bigl(n,K_0;\ct_{K_0}(R_*)\bigr)
\qquad\text{on }M.
\]
This completes the proof.
\end{proof}

\begin{proof}[Proof of Theorem A]
By Proposition~\ref{prop:main}, for every $R_*>0$,
\[
|H|\le c_*\bigl(n,K_0;\ct_{K_0}(R_*)\bigr)
\qquad\text{on }M.
\]
Since
\[
\lim_{R_*\to\infty}\ct_{K_0}(R_*)=\sqrt{K_0},
\]
the continuity of $c_*(n,K_0;\,\cdot\,)$ implies that
\[
\lim_{R_*\to\infty}
c_*\bigl(n,K_0;\ct_{K_0}(R_*)\bigr)
=c_*(n,K_0;\sqrt{K_0}).
\]
Passing to the limit in the estimate above, we obtain
\[
|H|\le c_*(n,K_0;\sqrt{K_0}).
\]
By \eqref{eq:cstar-explicit},
$
c_*(n,K_0;\sqrt{K_0})=(n+1)\sqrt{K_0}.
$
Therefore,
\[
|H|\le(n+1)\sqrt{K_0}
\qquad\text{on }M.
\]
In particular, if $K_0=0$, then $|H|\le0$, and hence
$H\equiv0$ on $M$. Thus every leaf of the foliation is
minimal.
\end{proof}

\appendix
\section{Proofs of Lemmas~\ref{lem:1d} and~\ref{lem:reg}}\label{app:reg}

\subsection{Proof of Lemma~\ref{lem:1d}}

\begin{customlemma}{\ref*{lem:1d}}
Let $I\subset\mathbb{R}$ be an open interval, and let
$v,\beta\in C^0(I)$. Assume that for every $t_0\in I$ and every
$\varepsilon>0$, there exists a $C^2$ function $\phi$ defined on a
neighborhood of $t_0$ such that
\[
\phi\geq v,\qquad \phi(t_0)=v(t_0),\qquad
\phi''(t_0)\leq\beta(t_0)+\varepsilon.
\]
Then $v''\leq\beta$ in the sense of distributions on $I$; that is,
\[
\int_I v(t)\zeta''(t)\,dt
\leq
\int_I \beta(t)\zeta(t)\,dt
\]
for every nonnegative $\zeta\in C_c^\infty(I)$.
\end{customlemma}

\begin{proof}
Since $\beta$ is continuous on $I$, there exists a function
$B\in C^2(I)$ such that $B''=\beta$. Set $w:=v-B.$ We first observe that $w$ satisfies the following upper support
property: for every $t_0\in I$ and every $\varepsilon>0$, there
exists a $C^2$ function $\varphi$ defined on a neighborhood of
$t_0$ such that
\[
\varphi\geq w,\qquad
\varphi(t_0)=w(t_0),\qquad
\varphi''(t_0)\leq\varepsilon.
\]
Indeed, given $t_0\in I$ and $\varepsilon>0$, apply the hypothesis
to $v$ with the same $\varepsilon$. Let $\phi_0$ be the resulting
upper support function, so that
\[
\phi_0\geq v,\qquad
\phi_0(t_0)=v(t_0),\qquad
\phi_0''(t_0)\leq\beta(t_0)+\varepsilon.
\]
Define $\varphi:=\phi_0-B$. Then $\varphi\geq v-B=w$, $\varphi(t_0)=w(t_0)$, and
\[
\begin{aligned}
\varphi''(t_0)
&=\phi_0''(t_0)-B''(t_0)\\
&\leq\beta(t_0)+\varepsilon-\beta(t_0)
=\varepsilon.
\end{aligned}
\]
This establishes the asserted upper support property of $w$.

We claim that $w$ is concave on $I$. Suppose, to the contrary,
that $w$ is not concave. Then there exist $a,b\in I$, with $a<b$,
such that $w$ lies strictly below the chord joining
$(a,w(a))$ and $(b,w(b))$ at some point of $(a,b)$. Let $\ell$
be the affine function satisfying $\ell(a)=w(a)$, $\ell(b)=w(b)$. Since $w$ is not concave, we have
\[
m:=\max_{t\in[a,b]}\bigl(\ell(t)-w(t)\bigr)>0.
\]
Choose $\delta>0$ sufficiently small that $\frac{\delta(b-a)^2}{4}<m$. Define
\[
\tilde{h}(t):=\ell(t)-\delta(t-a)(b-t)-w(t),
\]
$t\in[a,b]$. Since $w$ and $\ell$ are continuous, $\tilde{h}$ is continuous on
$[a,b]$. Moreover, $\tilde{h}(a)=\tilde{h}(b)=0$. Since $0\leq(t-a)(b-t)\leq\frac{(b-a)^2}{4}$ for all $t\in[a,b]$, we obtain
\[
\max_{t\in[a,b]}\tilde{h}(t)
\geq m-\frac{\delta(b-a)^2}{4}>0.
\]
Consequently, $h$ attains its positive maximum at some
$t^*\in(a,b)$.

Now define the $C^2$ function
\[
\psi(t):=\ell(t)-\delta(t-a)(b-t)-\tilde{h}(t^*).
\]
By the definition of $h$ and the maximality of $\tilde{h}(t^*)$, we have
\[
\begin{aligned}
\psi(t)-w(t)
&=\ell(t)-\delta(t-a)(b-t)-\tilde{h}(t^*)-w(t)\\
&=\tilde{h}(t)-\tilde{h}(t^*)\leq0
\end{aligned}
\]
for every $t\in[a,b]$. Hence $\psi\leq w$ on $[a,b]$, $\psi(t^*)=w(t^*)$. Furthermore, since $\ell$ is affine,
$\psi''(t)=2\delta$ for every $t\in[a,b]$.

We now apply the upper support property of $w$ at $t^*$ with
$\varepsilon=\delta$. There exists a $C^2$ function $\varphi_1$
defined on a neighborhood of $t^*$ such that
\[
\varphi_1\geq w,\qquad
\varphi_1(t^*)=w(t^*),\qquad
\varphi_1''(t^*)\leq\delta.
\]
Since $\psi\leq w$ on $[a,b]$ and $t^*\in(a,b)$, we have
$\varphi_1\geq\psi$ on a neighborhood of $t^*$, with equality
at $t^*$. Thus $\varphi_1-\psi$ has a local minimum at $t^*$,
and therefore
\[
(\varphi_1-\psi)''(t^*)\geq0.
\]
It follows that
\[
\varphi_1''(t^*)\geq\psi''(t^*)=2\delta,
\]
contradicting $\varphi_1''(t^*)\leq\delta$, since $\delta>0$.
Therefore, $w$ is concave on $I$.

It remains to show that the concavity of $w$ implies
\[
\int_I w(t)\zeta''(t)\,dt\leq0
\]
for every nonnegative $\zeta\in C_c^\infty(I)$. Fix such a function $\zeta$. For every sufficiently small $h>0$,
the concavity of $w$ gives
\[
w(t+h)-2w(t)+w(t-h)\leq0
\]
whenever $t-h,t,t+h\in I$. Since $\zeta$ has compact support,
we may choose $h>0$ sufficiently small so that all relevant
points lie in $I$. Multiplying by $\zeta(t)\geq0$ and integrating,
we obtain
\[
\int_I
\zeta(t)\frac{w(t+h)-2w(t)+w(t-h)}{h^2}\,dt
\leq0.
\]
A change of variables in the first and third terms, together
with the compact support of $\zeta$, yields
\[
\int_I
w(t)\frac{\zeta(t+h)-2\zeta(t)+\zeta(t-h)}{h^2}\,dt
\leq0.
\]
As $h\to0$, the central second difference of $\zeta$ converges
uniformly to $\zeta''$ on a compact subset of $I$ containing the
supports of these functions. Since $w$ is continuous, we may pass
to the limit and conclude that
\[
\int_I w(t)\zeta''(t)\,dt\leq0.
\]

Finally, recalling that $v=w+B$, we obtain
\[
\begin{aligned}
\int_I v(t)\zeta''(t)\,dt
&=\int_I w(t)\zeta''(t)\,dt
  +\int_I B(t)\zeta''(t)\,dt\\
&\leq\int_I B(t)\zeta''(t)\,dt.
\end{aligned}
\]
Since $\zeta$ has compact support, integration by parts twice
gives
\[
\int_I B(t)\zeta''(t)\,dt
=\int_I B''(t)\zeta(t)\,dt
=\int_I\beta(t)\zeta(t)\,dt.
\]
Therefore,
\[
\int_I v(t)\zeta''(t)\,dt
\leq\int_I\beta(t)\zeta(t)\,dt
\]
for every nonnegative $\zeta\in C_c^\infty(I)$, as desired.
\end{proof}

\subsection{Proof of Lemma~\ref{lem:reg}}\label{subapp:reg}

\begin{lemma}\label{lem:supergrad}
Let $f$ be $L$-Lipschitz on a neighbourhood of $x$ and let $\varphi$ be $C^1$ near $x$ with $\varphi\ge f$ near $x$ and $\varphi(x)=f(x)$. Then $|\nabla\varphi(x)|\le L$.
\end{lemma}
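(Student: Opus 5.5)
The plan is to argue along geodesics through $x$ and use the one-sided comparison between $\varphi$ and $f$ in a single direction. Since $\varphi$ is $C^1$ near $x$, its gradient at $x$ is determined by the directional derivatives $d\varphi_x(v)$ for unit $v\in T_xM$. It therefore suffices to show that $-d\varphi_x(v)\le L$ for every unit $v$. Applying this to $-v$ in place of $v$ then gives $d\varphi_x(v)\le L$, and taking $v=\nabla\varphi(x)/|\nabla\varphi(x)|$ when $\nabla\varphi(x)\neq0$ yields $|\nabla\varphi(x)|\le L$. If $\nabla\varphi(x)=0$ there is nothing to prove.

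To get the bound in a fixed unit direction $v$, I consider the geodesic $\gamma(t)=\exp_x(tv)$ for small $t>0$. It stays inside the neighbourhood where $f$ is $L$-Lipschitz and $\varphi\ge f$. For such $t$,
\[
\varphi(\gamma(t))-\varphi(x)\ \ge\ f(\gamma(t))-f(x)\ \ge\ -L\,\operatorname{dist}_M(x,\gamma(t))\ \ge\ -Lt .
\]
The first inequality uses $\varphi\ge f$ together with $\varphi(x)=f(x)$. The second is the Lipschitz bound, which I read with respect to the Riemannian distance. The third holds because $\gamma$ has unit speed, so the curve $\gamma|_{[0,t]}$ has length $t$. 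Dividing by $t$ and letting $t\to0^+$ gives $d\varphi_x(v)=\frac{d}{dt}\big|_{t=0}\varphi(\gamma(t))\ge -L$, which is the inequality $-d\varphi_x(v)\le L$ needed above.

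This one-sided inequality is only useful because it holds for every unit $v$, including $-v$. Applying it in direction $-v$ gives $d\varphi_x(-v)\ge -L$, that is, $d\varphi_x(v)\le L$. Hence $|d\varphi_x(v)|\le L$ for all unit $v$, which gives $|\nabla\varphi(x)|\le L$.

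The only point needing care is what ``$L$-Lipschitz'' means in the statement. If it is meant with respect to $\operatorname{dist}_M$, the argument above is complete. If instead it is meant in the Euclidean metric of a chart, as in Lemma~\ref{lem:reg}, where $f\circ\exp_{c_i}$ appears, the same computation goes through using straight lines $t\mapsto\exp_x(tv)$ in normal coordinates centred at $x$. In those coordinates Euclidean and Riemannian lengths agree to first order at $x$, so the limit $t\to0^+$ is unaffected.
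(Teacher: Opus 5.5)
Your proof is correct and is essentially the paper's argument: compare $\varphi$ with $f$ along $t\mapsto\exp_x(tv)$ to get $\langle\nabla\varphi(x),v\rangle\ge -L$, then apply this to $-v$. Using $\operatorname{dist}_M(x,\exp_x(tv))\le t$ instead of the paper's equality for small $t$ is a harmless simplification.

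The Lipschitz condition is meant with respect to $\operatorname{dist}_M$, which is how the lemma is used in the proof of Lemma~\ref{lem:reg}, where the chart factor $\sqrt2$ is inserted only afterwards. So your last paragraph is unnecessary. As written it is also not quite right: if the Lipschitz bound is Euclidean in a chart centred at $c_i\neq x$, the two metrics need not agree at $x$, and you only get the constant $L$ up to the metric distortion.
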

\begin{proof}
Let $v\in T_xM$ be a unit vector. For all sufficiently small
$t>0$, the minimizing property of radial geodesics gives $\operatorname{dist}_M\bigl(x,\exp_x(tv)\bigr)=t.$
Since $f$ is $L$-Lipschitz, we have $|f(y)-f(x)|\leq L\,\operatorname{dist}_M(x,y)$
for all $y$ sufficiently close to $x$. In particular, $f(y)\geq f(x)-L\,\operatorname{dist}_M(x,y).$ Taking $y=\exp_x(tv)$ and using $\varphi\geq f$ and
$\varphi(x)=f(x)$, we obtain
\[
\begin{aligned}
\varphi(\exp_x(tv))
&\geq f(\exp_x(tv))\\
&\geq f(x)-L\,\operatorname{dist}_M\bigl(x,\exp_x(tv)\bigr)\\
&=f(x)-Lt\\
&=\varphi(x)-Lt.
\end{aligned}
\]
Thus,
\[
\frac{\varphi(\exp_x(tv))-\varphi(x)}{t}\geq -L.
\]
Letting $t\to0^+$ and using the differentiability of $\varphi$,
we conclude that
\[
\langle\nabla\varphi(x),v\rangle\geq-L.
\]
Applying the same argument to $-v$ yields $\langle\nabla\varphi(x),v\rangle\leq L.$ Therefore, $|\langle\nabla\varphi(x),v\rangle|\leq L$ for every unit vector $v\in T_xM$. Taking the supremum over all unit vectors, we obtain
$|\nabla\varphi(x)|\leq L.$ This completes the proof.
\end{proof}

\begin{customlemma}{\ref*{lem:reg}}
Let $W\subset M$ be open, $a\in\R$, $L>0$, and let $f\in C^0(W)$ be such that every point of $W$ has a neighbourhood on which $f$ is $L$-Lipschitz, and $\Hess f\le a\langle\cdot,\cdot\rangle$ in the support sense at every point of $W$. Let $K\subset W$ be compact and $\eta>0$. Then there are $\sigma>0$ and points $c_1,\dots,c_m\in K$ such that, with $U_i:=B_{2\sigma}(c_i)$:
\begin{enumerate}
\item[\textup{(i)}] $K\subset U_1\cup\dots\cup U_m$, and $\exp_{c_i}$ maps the Euclidean ball $B(0,4\sigma)\subset T_{c_i}M\cong\R^{n+1}$ (identified by an orthonormal basis) diffeomorphically onto the geodesic ball $B_{4\sigma}(c_i)\subset W$;
\item[\textup{(ii)}] for $0<\delta<\sigma$ the functions $f_{i,\delta}:=(u_i*\mu_\delta)\circ\exp_{c_i}^{-1}$, $u_i:=f\circ\exp_{c_i}$, are smooth on $B_{3\sigma}(c_i)$, $|\nabla f_{i,\delta}|\le2L$ there, $f_{i,\delta}\to f$ uniformly on $\overline U_i$ and $\nabla f_{i,\delta}\to\nabla f$ a.e. on $U_i$ as $\delta\to0$;
\item[\textup{(iii)}] there are $\omega_i(\delta)\to0$ such that $\Hess f_{i,\delta}\le(a+\eta+\omega_i(\delta))\langle\cdot,\cdot\rangle$ on $\overline U_i$.
\end{enumerate}
\end{customlemma}

\begin{proof}
We divide the proof into three steps, corresponding to assertions
\textup{(i)}, \textup{(ii)}, and \textup{(iii)}.

\medskip
\noindent
\textit{Proof of item \textup{(i)}.}

Since $K\subset W$ is compact and $W$ is open, there exists $r_W>0$ such that $B_{r_W}(c)\subset W$ for every $c\in K$. Moreover, the injectivity radius function
$c\mapsto\operatorname{inj}(c)$ is positive and continuous.
Consequently,
$
r_K:=\min_{c\in K}\operatorname{inj}(c)>0.
$
Thus, for every $c\in K$, the exponential map is a
diffeomorphism from $B(0,r)$ onto $B_r(c)$ whenever
$
0<r<\min\{r_W,r_K\}.
$

We next establish uniform estimates in normal coordinates.
The associated normal coordinate maps depend smoothly on
the base point and the coordinate variables. Since the metric
coefficients satisfy $g_{ij}(0)=\delta_{ij}$ at the origin
of each normal chart, compactness and smooth dependence imply
that, after decreasing $r>0$ if necessary,
\[
\frac12 I\leq (g_{ij}(y))\leq 2I
\]
uniformly for all base points $c\in K$ and all
$y\in B(0,r)$.

Furthermore, the Christoffel symbols vanish at the origin of normal coordinates and their first derivatives with respect to the coordinate
variables are uniformly bounded on these coordinate balls. Indeed, cover $K$ by finitely many coordinate
neighborhoods $V_1,\ldots,V_N$, each equipped with a smooth
orthonormal frame. For each $\alpha\in\{1,\ldots,N\}$, choose
a compact set $K_\alpha\subset V_\alpha$ such that
\[
K\subset\bigcup_{\alpha=1}^N K_\alpha.
\]
Let
$
F_\alpha(p,y)
:=
\exp_p\bigl(E_\alpha(p)y\bigr),
$
where $E_\alpha(p)\colon\mathbb R^{n+1}\to T_pM$
is the isometry determined by the chosen orthonormal frame.
Since the Riemannian metric is smooth, the maps $F_\alpha$
and the corresponding metric coefficients
$g_{ij}^{(\alpha)}(p,y)$ depend smoothly on $(p,y)$
wherever they are defined. Moreover, the matrices
$g^{(\alpha)}(p,y)$ are positive definite, so their inverses
also depend smoothly on $(p,y)$. It follows from the formula
\[
\Gamma^{(\alpha),k}_{ij}(p,y)
=
\frac12\sum_{\ell=1}^{n+1}
g^{(\alpha),k\ell}(p,y)
\left(
\partial_i g^{(\alpha)}_{j\ell}(p,y)
+\partial_j g^{(\alpha)}_{i\ell}(p,y)
-\partial_\ell g^{(\alpha)}_{ij}(p,y)
\right)
\]
that the Christoffel symbols depend smoothly on $(p,y)$.
In particular, the map
\[
(p,y)\longmapsto D_y\Gamma^{(\alpha)}(p,y)
\]
is continuous.

After decreasing the radius $r>0$, if necessary, the
coordinate maps are defined on a neighborhood of
$K_\alpha\times\overline{B(0,r)}$ for every $\alpha$.
Since each set $K_\alpha\times\overline{B(0,r)}$ is compact,
the continuity of $D_y\Gamma^{(\alpha)}$ implies that
there exists a constant $C_\alpha\geq0$ such that
\[
\bigl\|D_y\Gamma^{(\alpha)}(p,y)\bigr\|
\leq C_\alpha
\]
for every $p\in K_\alpha$ and
$y\in\overline{B(0,r)}$. Since the cover is finite, setting
$
C:=\max_{1\leq\alpha\leq N}C_\alpha
$
gives a uniform bound independent of the base point and the chosen coordinate neighborhood. Finally, since the Christoffel symbols vanish at the origin
of normal coordinates, we have
$
\Gamma^{(\alpha)}(p,0)=0.
$
Thus, by the fundamental theorem of calculus, for every
$p\in K_\alpha$ and $y\in B(0,r)$,
\[
\begin{aligned}
\Gamma^{(\alpha)}(p,y)
&=\Gamma^{(\alpha)}(p,y)
  -\Gamma^{(\alpha)}(p,0)\\
&=\int_0^1
D_y\Gamma^{(\alpha)}(p,ty)[y]\,dt.
\end{aligned}
\]
Consequently,
\[
\begin{aligned}
\bigl\|\Gamma^{(\alpha)}(p,y)\bigr\|
&\leq
\int_0^1
\bigl\|D_y\Gamma^{(\alpha)}(p,ty)\bigr\|
\,|y|\,dt\\
&\leq C|y|.
\end{aligned}
\]
Taking $\Lambda:=C$, we conclude that
\[
\bigl\|\Gamma^{(\alpha)}(p,y)\bigr\|
\leq\Lambda|y|
\]
uniformly for all base points $p\in K$ and all
$y\in B(0,r)$ in the corresponding normal coordinates. Hence there exists $\Lambda\geq0$, independent of $c\in K$,
such that
\[
\|\Gamma(y)\|\leq\Lambda |y|
\qquad\text{for all }y\in B(0,r).
\]

Choose $\sigma_0>0$ sufficiently small that
$4\sigma_0<r$. Then, for every $c\in K$,
\[
\exp_c:B(0,4\sigma_0)\longrightarrow B_{4\sigma_0}(c)
\]
is a diffeomorphism onto a ball contained in $W$, and
the estimates
\begin{equation}\label{eq:normal}
\frac12 I\leq(g_{ij}(y))\leq2I,
\qquad
\|\Gamma(y)\|\leq\Lambda|y|
\end{equation}
hold uniformly.

Now choose $0<\sigma\leq\sigma_0$ so small that
$
14\sqrt2\,\Lambda L\sigma\leq\eta.
$
If $\Lambda=0$, this restriction is automatic. By compactness
of $K$, there exist finitely many points
$c_1,\ldots,c_m\in K$ such that
\[
K\subset\bigcup_{i=1}^m B_\sigma(c_i).
\]
Set
$
U_i:=B_{2\sigma}(c_i),
$
for $i=1,\ldots,m.$
Then
\[
K\subset\bigcup_{i=1}^m U_i,
\]
and each exponential map
$
\exp_{c_i}:B(0,4\sigma)\longrightarrow B_{4\sigma}(c_i)
$
is a diffeomorphism onto a ball contained in $W$.
This proves \textup{(i)}.

\medskip
\noindent
\textit{Proof of item \textup{(ii)}.}

Fix $i\in\{1,\ldots,m\}$. For simplicity, write
\[
\exp:=\exp_{c_i},\qquad
u:=f\circ\exp,\qquad
\mathsf D:=B(0,4\sigma).
\]
Let $g_{jk}$ and $\Gamma^k_{jl}$ denote the metric
coefficients and Christoffel symbols in these normal
coordinates. 
The function \(u=f\circ\exp_{c_i}\) is locally Lipschitz on \(\mathsf D\). By Rademacher's theorem, \(u\) is differentiable almost everywhere, and its weak Euclidean gradient \(Du\) agrees almost everywhere with its classical gradient. Since \(f\) is locally \(L\)-Lipschitz with respect to the Riemannian distance, its weak Riemannian gradient satisfies
\[
|\nabla f|_g\le L
\quad\text{almost everywhere}.
\]
At almost every point \(y\in\mathsf D\), the chain rule gives
\[
Du(y)=d(f\circ\exp_{c_i})_y.
\]
Writing \(g(y)\) for the matrix of the metric in normal coordinates and identifying \(T_{c_i}M\) with \(\mathbb R^{n+1}\) via an orthonormal basis, we have
\[
|\nabla f|_g^2
=Du(y)^{T}g(y)^{-1}Du(y).
\]
The metric bounds in \eqref{eq:normal}, imply \(g(y)^{-1}\ge \frac12 I\). Consequently,
\[
\frac12|Du(y)|^2
\le Du(y)^Tg(y)^{-1}Du(y)
=|\nabla f|_g^2
\le L^2
\]
for almost every \(y\in\mathsf D\). Therefore,
\begin{equation}\label{eq:Du-bound}
|Du|\le\sqrt2\,L
\quad\text{almost everywhere in }\mathsf D.
\end{equation}

We first prove a distributional upper bound for the Euclidean Hessian of $u$. Fix $y_0\in\mathsf D$ and put
$x_0=\exp(y_0)$. Given $\varepsilon>0$, the support-sense assumption on $\operatorname{Hess}f$ provides a $C^2$
upper support function $\phi$ for $f$ at $x_0$ such that
\[
\phi\geq f,\qquad \phi(x_0)=f(x_0),
\qquad
\operatorname{Hess}\phi(x_0)
\leq(a+\varepsilon)g_{x_0}.
\]
Set $v:=\phi\circ\exp$. Then $v$ is an upper support
function for $u$ at $y_0$. By
Lemma~\ref{lem:supergrad},
\[
|\nabla\phi(x_0)|\leq L,
\]
and therefore
\[
|Dv(y_0)|\leq\sqrt2 L.
\]

The coordinate expression for the Riemannian Hessian is
\[
\operatorname{Hess}\phi(\partial_j,\partial_l)
=\partial_{jl}v-\Gamma^k_{jl}\partial_kv.
\]
Consequently, for every $\xi\in\mathbb R^{n+1}$,
\[
\begin{aligned}
D^2v(y_0)(\xi,\xi)
&=\operatorname{Hess}\phi(x_0)(\xi,\xi)
  +\Gamma^k_{jl}(y_0)\xi^j\xi^l\partial_kv(y_0)\\
&\leq(a+\varepsilon)g_{y_0}(\xi,\xi)
  +\|\Gamma(y_0)\|\,|\xi|^2|Dv(y_0)|\\
&\leq a\,g_{y_0}(\xi,\xi)
  +2\varepsilon|\xi|^2
  +4\sqrt2\,\Lambda L\sigma|\xi|^2.
\end{aligned}
\]
Here we used $g\leq2I$, $|y_0|<4\sigma$, and
$|Dv(y_0)|\leq\sqrt2L$.

Define the continuous symmetric matrix-valued function
\[
\mathsf B(y):=
a\,g(y)+4\sqrt2\,\Lambda L\sigma I.
\]
The preceding estimate implies that, for every
$\xi\in\mathbb R^{n+1}$ and every $\varepsilon>0$,
\[
D^2v(y_0)(\xi,\xi)
\leq \xi^{\mathsf T}\mathsf B(y_0)\xi
  +2\varepsilon|\xi|^2.
\]

We now apply Lemma~\ref{lem:1d} to the restrictions of
$u$ to straight lines. Fix a unit vector $\xi$ and write
\[
q(t):=u(z+t\xi),\qquad z\in\xi^\perp.
\]
For each point $t_0$ in the interval where the line
lies in $\mathsf D$, the preceding support inequality
shows that $q$ admits $C^2$ upper support functions whose
second derivatives at $t_0$ are bounded above by
\[
\beta_\xi(t_0)+\rho,
\qquad
\beta_\xi(t):=
\xi^{\mathsf T}\mathsf B(z+t\xi)\xi,
\]
for every $\rho>0$. Lemma~\ref{lem:1d} therefore yields
\[
\int_J q(t)\theta''(t)\,dt
\leq\int_J\beta_\xi(t)\theta(t)\,dt
\]
for every nonnegative $\theta\in C_c^\infty(J)$.

Integrating over the orthogonal complement $\xi^\perp$
and applying Fubini's theorem, we conclude that
\begin{equation}\label{eq:distributional-Hessian}
\int_{\mathsf D}u\,\partial_\xi^2\zeta\,dy
\leq
\int_{\mathsf D}
(\xi^{\mathsf T}\mathsf B(y)\xi)\zeta(y)\,dy
\end{equation}
for every nonnegative $\zeta\in C_c^\infty(\mathsf D)$.
By homogeneity, this inequality holds for every
$\xi\in\mathbb R^{n+1}$.

Let $\mu\in C_c^\infty(\mathbb R^{n+1})$ be a nonnegative
mollifier supported in the unit ball, with
$\int\mu=1$, and set
\[
\mu_\delta(y):=\delta^{-(n+1)}\mu(y/\delta),
\qquad
u_\delta:=u*\mu_\delta.
\]
For $0<\delta<\sigma$, the function $u_\delta$ is smooth
on $B(0,3\sigma)$.

Fix $y\in B(0,3\sigma)$. Applying
\eqref{eq:distributional-Hessian} to the test function
$z\mapsto\mu_\delta(y-z)$ gives
\[
\begin{aligned}
D^2u_\delta(y)(\xi,\xi)
&=\int_{\mathsf D}
u(z)\,\partial_\xi^2[\mu_\delta(y-z)]\,dz\\
&\leq\int_{\mathsf D}
(\xi^{\mathsf T}\mathsf B(z)\xi)
\mu_\delta(y-z)\,dz\\
&=\xi^{\mathsf T}(\mathsf B*\mu_\delta)(y)\xi.
\end{aligned}
\]

Since \(u\) is locally Lipschitz, it is Lipschitz on \(\overline{B(0,3\sigma)}\), with some Lipschitz constant \(C>0\). For \(y\in\overline{B(0,2\sigma)}\) and \(0<\delta<\sigma\), the support condition \(\operatorname{supp}(\mu_\delta)\subset\overline{B(0,\delta)}\) ensures that \(y-z\in B(0,3\sigma)\) whenever \(z\in\operatorname{supp}(\mu_\delta)\). Since \(\int_{\mathbb R^{n+1}}\mu_\delta(z)\,dz=1\), we obtain
\[
\begin{aligned}
|u_\delta(y)-u(y)|
&=\left|\int_{\mathbb R^{n+1}}
\bigl(u(y-z)-u(y)\bigr)\mu_\delta(z)\,dz\right|\\
&\le C\int_{\mathbb R^{n+1}}|z|\mu_\delta(z)\,dz\\
&=C\delta\int_{\mathbb R^{n+1}}|w|\mu(w)\,dw.
\end{aligned}
\]
The right-hand side is independent of \(y\) and tends to zero as \(\delta\to0\). Hence
\[
u_\delta\longrightarrow u
\quad\text{uniformly on }\overline{B(0,2\sigma)}.
\]

Since \(u\in W_{\mathrm{loc}}^{1,\infty}(\mathsf D)\), its weak gradient \(Du\) belongs to \(L_{\mathrm{loc}}^1(\mathsf D;\mathbb R^{n+1})\). For every compact subset of \(\mathsf D\) and all sufficiently small \(\delta>0\), differentiation of convolutions gives
\[
Du_\delta=(Du)*\mu_\delta.
\]
Consequently,
\[
Du_\delta(y)=
\int_{\mathbb R^{n+1}}Du(y-\delta z)\mu(z)\,dz.
\]
In particular, by \eqref{eq:Du-bound} and the
nonnegativity of $\mu_\delta$,
$
|Du_\delta(y)|
\leq\sqrt2 L.
$

By the Lebesgue differentiation theorem, almost every \(y\in B(0,2\sigma)\) is a Lebesgue point of \(Du\). Fix such a point \(y\). Since \(\mu\ge0\), \(\operatorname{supp}\mu\subset\overline{B(0,1)}\), and \(\mu\in L^\infty(\mathbb R^{n+1})\), we obtain
\[
\begin{aligned}
|Du_\delta(y)-Du(y)|
&\le \|\mu\|_{L^\infty}
\int_{B(0,1)}
|Du(y-\delta z)-Du(y)|\,dz\\
&=\|\mu\|_{L^\infty}\delta^{-(n+1)}
\int_{B(y,\delta)}
|Du(x)-Du(y)|\,dx\\
&=\|\mu\|_{L^\infty}|B(0,1)|
\frac{1}{|B(y,\delta)|}
\int_{B(y,\delta)}
|Du(x)-Du(y)|\,dx.
\end{aligned}
\]
The last expression tends to zero as \(\delta\to0^+\), by the defining property of a Lebesgue point. Hence
\[
Du_\delta(y)\longrightarrow Du(y)
\quad\text{for almost every }y\in B(0,2\sigma).
\]

Define
$
f_{i,\delta}:=u_\delta\circ\exp_{c_i}^{-1}$
on $B_{3\sigma}(c_i).$ Then $f_{i,\delta}$ is smooth. Its Riemannian gradient
satisfies
\[
|\nabla f_{i,\delta}|^2
=(Du_\delta)^{\mathsf T}g^{-1}Du_\delta
\leq2|Du_\delta|^2
\leq4L^2.
\]
Thus
\[
|\nabla f_{i,\delta}|\leq2L
\quad\text{on }B_{3\sigma}(c_i).
\]
The smooth dependence of the metric coefficients and
the almost-everywhere convergence of $Du_\delta$ imply
\[
\nabla f_{i,\delta}\longrightarrow\nabla f
\quad\text{almost everywhere on }U_i.
\]
Finally, the uniform convergence of $u_\delta$ gives
\[
f_{i,\delta}\longrightarrow f
\quad\text{uniformly on }\overline U_i.
\]
This proves \textup{(ii)}.

\medskip
\noindent
\textit{Proof of item \textup{(iii)}.}

We continue to use the normal coordinates centered at $c_i$.
From the distributional Hessian estimate established above,
\[
D^2u_\delta(y)(\xi,\xi)
\leq
a\,(g*\mu_\delta)_y(\xi,\xi)
+4\sqrt2\,\Lambda L\sigma|\xi|^2.
\]
On the other hand, the coordinate formula for the
Riemannian Hessian gives
\[
\begin{aligned}
\operatorname{Hess}f_{i,\delta}(\xi,\xi)
&=D^2u_\delta(y)(\xi,\xi)
-\Gamma^k_{jl}(y)\xi^j\xi^l\partial_k u_\delta(y)\\
&\leq a\,(g*\mu_\delta)_y(\xi,\xi)
+4\sqrt2\,\Lambda L\sigma|\xi|^2+\|\Gamma(y)\|\,|\xi|^2|Du_\delta(y)|.
\end{aligned}
\]
For $y\in B(0,2\sigma)$, we have
\[
\|\Gamma(y)\|\leq2\Lambda\sigma.
\]
Using $|Du_\delta|\leq\sqrt2L$, we obtain
\begin{equation}\label{eq:final-hessian-coordinate}
\operatorname{Hess}f_{i,\delta}(\xi,\xi)
\leq a\,(g*\mu_\delta)_y(\xi,\xi)
+6\sqrt2\,\Lambda L\sigma|\xi|^2.
\end{equation}

Suppose that $g_y(\xi,\xi)=1$. Since $g_y\geq\frac12 I$,
we have $|\xi|^2\leq2$. Consequently,
\[
\begin{aligned}
\operatorname{Hess}f_{i,\delta}(\xi,\xi)
&\leq a
+|a|\left|
((g*\mu_\delta)_y-g_y)(\xi,\xi)
\right|
+12\sqrt2\,\Lambda L\sigma\\
&\leq a
+2|a|\sup_{\overline{B(0,2\sigma)}}
\|g*\mu_\delta-g\|
+12\sqrt2\,\Lambda L\sigma.
\end{aligned}
\]
Here the first inequality follows by writing
\[
a(g*\mu_\delta)_y(\xi,\xi)
=a\,g_y(\xi,\xi)
+a\big((g*\mu_\delta)_y-g_y\big)(\xi,\xi)
\]
and using $g_y(\xi,\xi)=1$. Since the metric coefficients are uniformly continuous
on a compact neighborhood of $\overline{B(0,2\sigma)}$,
the approximate identity property implies that
\[
\sup_{\overline{B(0,2\sigma)}}
\|g*\mu_\delta-g\|
\longrightarrow0
\qquad\text{as }\delta\to0.
\]
Define
$
\omega_i(\delta):=
2|a|\sup_{\overline{B(0,2\sigma)}}
\|g*\mu_\delta-g\|.
$
Then $\omega_i(\delta)\to0$ as $\delta\to0$.
Moreover, our choice of $\sigma$ ensures that
$
12\sqrt2\,\Lambda L\sigma
\leq14\sqrt2\,\Lambda L\sigma
\leq\eta.
$
It follows that
\[
\operatorname{Hess}f_{i,\delta}(\xi,\xi)
\leq a+\eta+\omega_i(\delta)
\]
whenever $g_y(\xi,\xi)=1$. By homogeneity, we conclude that
\[
\operatorname{Hess}f_{i,\delta}
\leq
\bigl(a+\eta+\omega_i(\delta)\bigr)g
\]
on $U_i$. Since $f_{i,\delta}$ and the metric are smooth
on a neighborhood of $\overline U_i$, the inequality
extends to $\overline U_i$ by continuity. This proves
\textup{(iii)} and completes the proof.
\end{proof}

\bigskip

\noindent {\bf Acknowledgments:} During the preparation of this work, the authors used Claude, Gemini, and ChatGPT for refining arguments, locating references, and drafting and revising the manuscript. All AI-generated suggestions were independently verified by the authors, who take full responsibility for the content and accuracy of the final manuscript.

\end{document}